\newtheorem{thm}{Theorem}[section]
\newtheorem{cor}[thm]{Corollary}
\newtheorem{lem}[thm]{Lemma}
\theoremstyle{definition}
\newtheorem{defin}[thm]{Definition}
\newtheorem{rem}[thm]{Remark}
\numberwithin{equation}{section}
\begin{document}

\title{Spectral Factorization and Lattice Geometry}

\author{Wayne Lawton\\
Department of Mathematics\\
Mahidol University, Bangkok, Thailand \\
and School of Mathematics and Statistics\\
University of Western Australia, Perth, Australia\\
E-mail: wayne.lawton@uwa.edu.au}

\date{}

\maketitle

\renewcommand{\thefootnote}{}

\footnote{2010 \emph{Mathematics Subject Classification}: Primary 11P21; Secondary 42B05.}

\footnote{\emph{Key words and phrases}: Fej\'{e}r-Riesz spectral factorization, holomorphic function, Bohr subset, modular group, Liouville-Roth constant, Diophantine approximation}

\renewcommand{\thefootnote}{\arabic{footnote}}
\setcounter{footnote}{0}

\begin{abstract}
We obtain conditions for a trigonometric polynomial $t$ of one variable to equal or be approximated by $|p|^2$ where $p$ has frequencies in a Bohr set of integers obtained by projecting lattice points in the open planar region bounded by the lines $y = \alpha x \pm \beta$ where $|\beta| \leq \frac{1}{4}$ and $\alpha$ is either rational or irrational with Liouville-Roth constant larger than $2.$ We derive and use a generalization of the Fej\'{e}r-Riesz spectral factorization lemma in one dimension, an approximate spectral factorization in two dimensions, the modular group action on the integer lattice, and Diophantine approximation.
\end{abstract}

\section{Introduction and Preliminary Results}

We introduce notation and summarize standard results that we will use throughout the paper.
$\mathbb{Z}_{+}, \mathbb{Z}, \mathbb{Q}, \mathbb{R}, \mathbb{C}$ denote the nonnegative integer,
integer, rational,
real, and complex numbers, $\mathbb{T} = \mathbb{R}/\mathbb{Z}$ is the real circle
group, $\mathbb{T}_c = \{z\in \mathbb{C}:|z|=1\}$ is the complex circle group,
and the maps $e_j : \mathbb{T} \rightarrow \mathbb{T}_c, j \in \mathbb{Z}$
defined by $e_j(x) = e^{2\pi i j x}, \ x \in \mathbb{T}$ are homomorphisms.
For $z = x+iy, x,y \in \mathbb{R}$ we define $\Re\, z = x$ and $\Im\, z = y.$
If $z \neq 0$ then there exist unique $r > 0$ and $-\pi < \theta \leq \pi$
such that $z = re^{i\theta}$ and we define $\log z = \log r + i \theta$ and
$\sqrt z = {\sqrt r} e^{i\theta/2}.$

The closed unit disk $\mathbb{D} = \{z\in \mathbb{C}:|z| \leq 1\}.$

For $\sigma > 0,$
$
    \mathbb{A}_\sigma = \{ z \in \mathbb{C}: e^{-\sigma} \leq |z| \leq e^{\sigma} \}
$
is a closed annulus.
For a topological space $X,$ $C(X)$ denotes the Banach algebra of bounded
continuous functions $f:X\rightarrow \mathbb{C}$ with norm
$||f||_{\infty} = \sup \, \{\, |f(x)| : x \in X\, \}.$ For $X \subset \mathbb{C},$ $X^{o}$ denotes its interior and
$H(X)$ denotes the Banach subspace of $C(X)$ consisting of functions whose restriction to $X^{o}$ is
holomorphic (complex analytic). For $f \in C(\mathbb{T})$ we define $||f||_1 = \int_{x \in \mathbb{T}} |f(x)|dx,$ the \emph{Fourier transform} $\widehat f \in C(Z)$ by
${\widehat f}\, (j) = \int_{x \in \mathbb{T}} f(x) \, e_{-j}(x) \, dx,\ j \in \mathbb{Z},$
and the \emph{frequency set}
$\hbox{freq}(f) = \{j \in \mathbb{Z}:{\widehat f}(j) \neq 0\}.$
If $f, h \in C(T)$ then $\hbox{freq}(\overline f) = -\hbox{freq}(f)$ and $\hbox{freq}(fh) \subseteq \hbox{freq}(f)+\hbox{freq}(h)$ and hence $\hbox{freq}(|f|^2) \subseteq \hbox{freq}(f) - \hbox{freq}(f).$
The convolution $f*h$ of $f, h \in C(T),$ defined by
$(f*h)(x) = \int_{y\in \mathbb{T}} f(y)h(x-y)dx,$
satisfies
$||f*h||_\infty \leq ||f||_1\, ||h||_\infty$
and
$\widehat {f*h} = {\widehat f}\ {\widehat h}.$
We let $\ell^{1}(\mathbb{Z})$ denote the Banach space of absolutely summable
$f \in C(\mathbb{Z})$ with norm
$||f||_{1} = \sum_{j \in \mathbb{Z}} |f(j)| < \infty$
and we let $A(T)$ denote the \emph{Wiener algebra} of $f \in C(T)$ such that $\widehat f \in \ell^1(\mathbb{Z}).$
We let $C^{\omega}(T)$ denote the algebra of \emph{real analytic} functions on $T$ and we let $\mathfrak{T}_1$ denote
the algebra of \emph{trigonometric polynomials} that consists of $f \in C(T)$ such that $\hbox{freq}(f)$ is finite.
For $t \in \mathfrak{T}_1$ define $n^{-}(t) = \min(\hbox{freq}(t)),$ $n^{+}(t) = \max(\hbox{freq}(t)),$ $n(t) = \max\{n^+(t),n^-(t)\}.$
For any $F \subset \mathbb{Z},$ we define
$
    \mathfrak{T}_1(F) =  \{ f \in \mathfrak{T}_1 : \hbox{freq}(f) \subseteq F\}.
$
Clearly $\mathfrak{T}_1 \subset C^{\omega}(T) \subset A(T) \subset C(T)$ and these algebras are closed under the involution $f \rightarrow \overline f.$
We define the algebras $A^{\pm}(\mathbb{T}) = \{f \in A(\mathbb{T}) : \hbox{freq}(f) \subseteq \pm \mathbb{Z}_+\}$ and observe that they are not closed under involution.
For $N \in \mathbb{Z}_+$ we define
\emph{Dirichlet}, \emph{Hilbert} and \emph{analytic} kernels by
$D_N = \sum_{j=-N}^{N} e_j,$ $H_N = \sum_{j=1}^{N} (e_j - e_{-j}),$ and $A_N^{\pm} = \frac{1}{2}(D_N \pm H_N).$
If $f \in C(T)$ then $D_N*f = \sum_{j=-N}^N {\widehat f}(j)\, e_j$ and in general $||f - D_N*f||_\infty$ does not converge to $0.$
If $f \in A(T)$ then $||f-D_N*f||_{\infty} \rightarrow 0$ as $N$ increases and its rate of convergence increases with smoothness of $f.$ The following approximation error bound holds {\cite[Chapter 10, Exercise 24 ]{Rudin2}}.
\begin{lem}
\label{lem1}
    $f \in C^{\omega}(T)$ if and only if there exists $\sigma > 0$ and $F \in H(\mathbb{A}_\sigma)$
    such that $f = F(e_1).$ Then for every $N \geq 0,$
\begin{equation}
\label{e1}
    ||f - D_N*f||_{\infty} < 2 \, ||F||_{\infty} \, \sigma^{-1} \, e^{-N\sigma}.
\end{equation}
\end{lem}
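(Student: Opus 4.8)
\emph{Proof proposal.}
The equivalence is the standard description of real--analytic periodic functions via their holomorphic extensions, and the bound is a Cauchy--estimate computation; I will organize the argument in three steps.

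The plan is to dispose of the easy implication first: if $F\in H(\mathbb{A}_\sigma)$ and $f=F(e_1)$, then $f(x)=F(e^{2\pi i x})$ is the composition of the entire map $x\mapsto e^{2\pi i x}$ (which carries $\mathbb{T}$ into $\mathbb{A}_\sigma^{o}$ since $\sigma>0$) with the holomorphic, hence real--analytic, restriction of $F$ to $\mathbb{A}_\sigma^{o}$; so $f\in C^{\omega}(T)$. For the converse I would start from $f\in C^{\omega}(T)$, regard it as a $1$--periodic real--analytic function on $\mathbb{R}$, and use that around each point its Taylor series converges on a disk; by compactness of $\mathbb{T}$ these radii are bounded below, so there is $\delta>0$ with $f$ extending to a holomorphic $g$ on the strip $S_\delta=\{w\in\mathbb{C}:|\Im w|<\delta\}$, and by the identity theorem the extension agrees with its translate, i.e. $g(w+1)=g(w)$. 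Since $w\mapsto e^{2\pi i w}$ maps $S_\delta$ onto the open annulus $e^{-2\pi\delta}<|z|<e^{2\pi\delta}$ and $g$ is invariant under the deck transformation $w\mapsto w+1$, the function $g$ descends to a holomorphic $F$ on that annulus with $g=F(e^{2\pi i (\cdot)})$; choosing any $\sigma$ with $0<\sigma<2\pi\delta$ gives $F\in H(\mathbb{A}_\sigma)$ (continuity on the closed annulus is automatic since $F$ is holomorphic on a strictly larger annulus) and $f=F(e_1)$ on $\mathbb{T}$.

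For the inequality \eqref{e1}, I would expand $F$ on $\mathbb{A}_\sigma$ in its Laurent series $F(z)=\sum_{j\in\mathbb{Z}}c_j z^{j}$, which converges uniformly on the unit circle, whence $\widehat f(j)=c_j$ for all $j$ by term--by--term integration. Cauchy's formula $c_j=\frac{1}{2\pi i}\int_{|z|=r}F(z)z^{-j-1}\,dz$, evaluated at $r=e^{\sigma}$ when $j\ge 0$ and at $r=e^{-\sigma}$ when $j<0$, yields $|\widehat f(j)|=|c_j|\le \|F\|_{\infty}\,e^{-|j|\sigma}$. Then
\begin{equation*}
\|f-D_N*f\|_{\infty}=\Bigl\|\sum_{|j|>N}c_j e_j\Bigr\|_{\infty}\le\sum_{|j|>N}|c_j|\le 2\|F\|_{\infty}\sum_{j=N+1}^{\infty}e^{-j\sigma}=\frac{2\|F\|_{\infty}e^{-(N+1)\sigma}}{1-e^{-\sigma}},
\end{equation*}
and the claimed bound follows from the elementary inequality $\sigma<e^{\sigma}-1$ (equivalently $e^{-\sigma}/(1-e^{-\sigma})<\sigma^{-1}$), valid for all $\sigma>0$, which upgrades the last quantity to a strict bound by $2\|F\|_{\infty}\sigma^{-1}e^{-N\sigma}$.

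I expect the only genuinely delicate point to be the gluing in the converse direction: one must argue carefully that the local holomorphic extensions, whose radii of convergence are uniformly bounded below by compactness of $\mathbb{T}$, patch together into a single holomorphic function on a full strip and are consistent with the period, so that passage to the annulus via $z=e^{2\pi i w}$ is legitimate; once that is in hand, identifying Fourier coefficients with Laurent coefficients and running the Cauchy estimates is routine.

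\medskip

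\noindent\emph{End of proof proposal.}
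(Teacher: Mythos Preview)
Your proposal is correct and follows essentially the same route as the paper: the Cauchy estimate on the Laurent coefficients, the geometric tail bound $2\|F\|_\infty e^{-(N+1)\sigma}/(1-e^{-\sigma})$, and the elementary inequality $e^{\sigma}>1+\sigma$ are exactly what the paper uses. You supply considerably more detail on the converse implication (strip extension and descent to the annulus), which the paper dismisses in one line as ``real analytic functions are closed under composition''; your added care there is welcome but does not change the strategy.
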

\begin{proof}
The first assertion follows since real analytic functions are closed under composition. If $F \in H(\mathbb{A}_\sigma)$ then Cauchy's integral formula
$$F(z) = \frac{1}{2\pi i} \left( \int_{|w| = e^{\sigma}} - \int_{|w| = e^{-\sigma}} \right)\,  \frac{F(w)}{w-z}dw, \ \ z \in \mathbb{A}_\sigma^{o},$$
implies that $F$ has the Laurent expansion
$F(z) = \sum_{j \in \mathbb{Z}} c_j \, e^{-|j|\sigma} \, z^j$ where
$c_j = \int_{0}^{1} F(e^{\sigma} \, e_1(x)) \, e_{-j}(x)\, dx, \, j \geq 0;$
$c_j = \int_{0}^{1} F(e^{-\sigma} \, e_1(x)) \, e_{-j}(x)\, dx, \, j < 0.$
Therefore, if $f = F(e_1)$ then
$||f-D_N*f||_{\infty} \leq 2 \, ||F||_{\infty}\, e^{-(N+1)\sigma}/(1-e^{-\sigma})$
and the second assertion follows since $e^\sigma > 1+\sigma.$
\end{proof}
\begin{lem}
\label{lem2}
If $f \in A^{\pm}(\mathbb{T})$
then $\exp(f) \in A^{\pm}(\mathbb{T}).$ Furthermore, for any $n, N \in \mathbb{Z}_+$ with $n \leq N,$
\begin{equation}
\label{e2}
    (1/2+A_{n}^{\pm})*\exp(f) = (1/2+A_{n}^{\pm})*\exp((1/2+A_{N}^{\pm})*f).
\end{equation}
\end{lem}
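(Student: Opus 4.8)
The plan is to rely on the fact that the Wiener algebra $A(\mathbb{T})$, with the norm $\|g\| := \sum_{j \in \mathbb{Z}} |\widehat g(j)|$, is a commutative Banach algebra under pointwise multiplication. Hence for $g \in A(\mathbb{T})$ the series $\exp(g) = \sum_{k \geq 0} g^k/k!$ has Cauchy partial sums (because $\|g^k\| \leq \|g\|^k$), so it converges in $A(\mathbb{T})$, and --- since $A(\mathbb{T})$ embeds continuously in $C(\mathbb{T})$ --- its sum is the pointwise function $x \mapsto \exp(g(x))$. For the first assertion I would track frequency sets: by the stated inclusion $\hbox{freq}(gh) \subseteq \hbox{freq}(g)+\hbox{freq}(h)$ and induction, $\hbox{freq}(f^k)$ lies in the $k$-fold sumset of $\hbox{freq}(f) \subseteq \pm\mathbb{Z}_+$, which is again contained in $\pm\mathbb{Z}_+$; since $g \mapsto \widehat{g}(j)$ is continuous on $A(\mathbb{T})$, $\widehat{\exp(f)}(j) = \sum_{k\geq 0}\widehat{f^k}(j)/k! = 0$ for every $j \notin \pm\mathbb{Z}_+$, so $\exp(f) \in A^{\pm}(\mathbb{T})$.

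For \eqref{e2} I would first record the elementary identities $1/2 + A_N^{+} = \sum_{j=0}^{N} e_j$ and $1/2 + A_N^{-} = \sum_{j=0}^{N} e_{-j}$ (the constant $1/2 = \tfrac12 e_0$ exactly offsets the half-weight of $A_N^{\pm}$ on $e_0$). Thus convolution with $1/2 + A_n^{+}$ is the bounded projection $P_n^+ : g \mapsto \sum_{j=0}^{n} \widehat g(j)\, e_j$ of $A(\mathbb{T})$ onto $\mathfrak{T}_1(\{0,\dots,n\})$, and convolution with $1/2 + A_n^{-}$ is the analogous projection $P_n^-$ onto $\mathfrak{T}_1(\{-n,\dots,0\})$; in particular $P_n^{+}$ annihilates any $g \in A^{+}(\mathbb{T})$ all of whose frequencies exceed $n$, and similarly for $P_n^{-}$.

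Now take $f \in A^{+}(\mathbb{T})$ --- the $A^{-}$ case is identical with $e_{-j}$ in place of $e_j$, or follows by applying the $A^+$ case to $\overline f$ --- fix $n \leq N$, and split $f = u + v$ with $u := P_N^{+} f = \sum_{j=0}^{N}\widehat f(j)\, e_j \in \mathfrak{T}_1$ and $v := f - u = \sum_{j>N}\widehat f(j)\, e_j \in A^{+}(\mathbb{T})$, so every frequency of $v$ is $> N$. Using $\exp(f) = \exp(u)\exp(v)$ (true pointwise, hence in $A(\mathbb{T})$), write $\exp(f) = \exp(u) + \exp(u)\bigl(\exp(v)-1\bigr)$. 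By the first assertion $\exp(u) \in A^{+}(\mathbb{T})$ and $\exp(v)-1 = \sum_{k\geq1} v^k/k! \in A^{+}(\mathbb{T})$, and $\hbox{freq}(v^k)$ lies in the $k$-fold sumset of $\{N+1,N+2,\dots\}$, so (again passing to the limit via continuity of the Fourier coefficients) all frequencies of $\exp(v)-1$ are $> N$; hence all frequencies of $\exp(u)\bigl(\exp(v)-1\bigr)$ are $> N \geq n$. Therefore $P_n^{+}$ kills this term, leaving $P_n^{+}\exp(f) = P_n^{+}\exp(u) = P_n^{+}\exp\bigl(P_N^{+}f\bigr)$, which is \eqref{e2}.

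The only point needing a little care is the repeated passage from finite partial sums to the full series --- namely that the frequency set of a series convergent in $A(\mathbb{T})$ is contained in the union of the frequency sets of its terms --- and this follows at once from the continuity of $g \mapsto \widehat g(j)$ on $A(\mathbb{T})$. Everything else is bookkeeping with sumsets of frequencies and with the explicit form of the kernels $A_N^{\pm}$.
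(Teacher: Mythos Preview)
Your proof is correct and follows essentially the same route as the paper: you set $u=(1/2+A_N^{\pm})*f$ and $v=f-u$, observe that $\exp(v)-1$ has only frequencies $>N$, and conclude that $\exp(f)=\exp(u)+\exp(u)(\exp(v)-1)$ differs from $\exp(u)$ only in frequencies $>N\geq n$. The paper's proof is the same argument written more tersely, without your explicit verification that $1/2+A_N^{\pm}=\sum_{j=0}^N e_{\pm j}$ or your care about passing frequency information through the limit.
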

\begin{proof} The first assertion follows since $A(T)$ is a Banach algebra under the norm $||f||_{A(\mathbb{T})} = ||\widehat f||_1.$
Let $g =  (1/2+A_{N}^{\pm})*f.$ Since $f - g = \sum_{j=N+1}^{\infty} {\widehat f}(j) \, e_{\pm j},$
$\exp(f - g) = 1 + \sum_{j=N+1}^{\infty} c_j \, e_{\pm j}$ where $c_j$ is absolutely summable.
Therefore $\exp(f) = \exp(g) + \sum_{j=N+1}^{\infty} d_j \, e_{\pm j}$ where $d_j$ is absolutely summable
and hence (\ref{e2}) holds.
\end{proof}
\begin{lem}
\label{lem3} The kernels defined above satisfy the following inequalities.
\begin{equation}
\label{e3}
    ||D_N||_{1} \leq 1 + \log (2N+1).
\end{equation}
\begin{equation}
\label{e4}
    ||A_N^{\pm}||_{1} \leq \frac{3}{2} + \log (N) \ \hbox{and} \  ||1/2 + A_N^{\pm}||_{1} \leq 1 + \log (N+1).
\end{equation}
\begin{equation}
\label{e5}
    ||H_N||_{1} \leq 1 + 2\log (N).
\end{equation}
\end{lem}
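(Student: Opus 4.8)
The plan is to reduce all three estimates to one elementary device. Each of the kernels $D_N$, $H_N$, $A_N^{\pm}$ and $1/2 + A_N^{\pm}$ is, up to a scalar, a contiguous block $\sum_{j=a}^{b} e_j$ of a geometric series, so on $[-\tfrac12,\tfrac12]$ its modulus is bounded simultaneously by the number of its terms (each $e_j$ being unimodular) and, via the geometric‑sum formula together with the elementary inequality $|\sin \pi x| \geq 2|x|$ valid for $|x| \leq \tfrac12$, by a fixed multiple of $1/|x|$. Writing $\int_{\mathbb{T}} = 2\int_{0}^{1/2}$ and splitting the integral at the point where these two bounds cross then produces a bounded contribution from the peak plus a logarithmic contribution from the $1/|x|$ tail, which is exactly the shape of (\ref{e3})--(\ref{e5}).

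For (\ref{e3}) I would use $D_N(x) = \sin((2N+1)\pi x)/\sin \pi x$, whence $|D_N(x)| \leq \min\bigl(2N+1,\ 1/(2|x|)\bigr)$ on $[-\tfrac12,\tfrac12]$; the two bounds cross at $|x| = 1/(2(2N+1))$, the peak part contributes $2(2N+1)\cdot\tfrac{1}{2(2N+1)} = 1$ to $||D_N||_1$, and $\int_{1/(2(2N+1))}^{1/2} \tfrac{dx}{x} = \log(2N+1)$, giving (\ref{e3}).

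For (\ref{e4}) I would first record the identities $A_N^{+} = \tfrac12 + \sum_{j=1}^{N} e_j$ and $A_N^{-} = \tfrac12 + \sum_{j=1}^{N} e_{-j}$, so that $||A_N^{+}||_1 = ||A_N^{-}||_1$ (by $x \mapsto -x$) and $\tfrac12 + A_N^{\pm} = \sum_{j=0}^{\pm N} e_j$. The last is a geometric sum with $\bigl|\sum_{j=0}^{N} e_j(x)\bigr| = |\sin((N+1)\pi x)|/|\sin \pi x| \leq \min\bigl(N+1,\ 1/(2|x|)\bigr)$, and the same splitting (crossover at $1/(2(N+1))$) gives $||\,\tfrac12 + A_N^{\pm}||_1 \leq 1 + \log(N+1)$. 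For the first inequality I would use $||A_N^{\pm}||_1 \leq \tfrac12 + \bigl\|\sum_{j=1}^{N} e_j\bigr\|_1$ together with $\bigl|\sum_{j=1}^{N} e_j(x)\bigr| = |\sin N\pi x|/|\sin \pi x| \leq \min\bigl(N,\ 1/(2|x|)\bigr)$, which yields $\bigl\|\sum_{j=1}^{N} e_j\bigr\|_1 \leq 1 + \log N$ and hence $||A_N^{\pm}||_1 \leq \tfrac32 + \log N$.

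For (\ref{e5}) I would observe that $\overline{e_j} = e_{-j}$, so $H_N = \sum_{j=1}^{N}(e_j - \overline{e_j}) = 2i\,\Im \sum_{j=1}^{N} e_j$ and therefore $|H_N(x)| \leq 2\bigl|\sum_{j=1}^{N} e_j(x)\bigr| = 2|\sin N\pi x|/|\sin \pi x| \leq \min\bigl(2N,\ 1/|x|\bigr)$ on $[-\tfrac12,\tfrac12]$; splitting at $1/(2N)$ immediately gives the clean bound $||H_N||_1 \leq 2 + 2\log N$. To reach the sharper constant in (\ref{e5}) one keeps the extra factor $|\sin((N+1)\pi x)| \leq \min(1,(N+1)\pi|x|)$ present in the identity $|H_N(x)| = 2|\sin((N+1)\pi x)|\,|\sin(N\pi x)|/|\sin \pi x|$ --- which also encodes $H_N(0) = 0$ --- and re‑optimizes the split point. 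I expect this last refinement to be the only genuinely delicate step: the triangle inequality applied to $H_N = A_N^{+} - A_N^{-}$ loses too much, so a direct, slightly sharpened estimate near the origin is needed to pin down the constant, whereas everything else is routine integration.
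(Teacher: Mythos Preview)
Your treatment of (\ref{e3}) is exactly the paper's argument (attributed there to Powell): bound $|D_N(x)|$ by $\min\{2N+1,\,1/(2|x|)\}$, split at the crossover, and optimise. For (\ref{e4}) the paper says only ``a similar argument,'' and what you have written is precisely that similar argument, so again you match.

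The one divergence is (\ref{e5}). The paper's entire proof of (\ref{e5}) is the identity $H_N = A_N^{+} - A_N^{-}$, i.e.\ the triangle inequality combined with (\ref{e4}). As you correctly note, this yields only $3 + 2\log N$ (or $2 + 2\log(N+1)$ via the second half of (\ref{e4})), not the stated $1 + 2\log N$; so your instinct to look for a direct estimate through $|H_N(x)| = 2|\sin N\pi x|\,|\sin(N+1)\pi x|/|\sin\pi x|$ is actually more scrupulous than the paper's own proof. In fact the constant $1$ in (\ref{e5}) appears to be a slip: for $N=1$ one has $H_1 = 2i\sin 2\pi x$ and $\|H_1\|_1 = 4/\pi > 1$. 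Since only the order $O(\log N)$ is used downstream (in Corollary \ref{cor2} and Theorem \ref{thm2}), any of the constants $1$, $2$, or $3$ would serve equally well, and your clean bound $\|H_N\|_1 \leq 2 + 2\log N$ is already fully adequate. The ``delicate refinement'' you flag is therefore unnecessary---and, with constant $1$, unattainable.
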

\begin{proof} We reproduce the derivation of (\ref{e3}) given in {\cite[Section 16.2]{Powell}}. \\
Since $D_N^{\pm}(0) = 2N+1$ and
$|D_N^{\pm}(x)| = \frac{\sin(\pi(2N+1)x}{\sin \pi x}, x \in (0,1/2]$ it follows that $|D_N(x)| \leq \min \{2N+1, \frac{1}{2x} \}.$
Therefore for
$0 < u \leq \frac{1}{2},$
$$
    ||D_N||_{1} \leq 2 \int_{0}^{u} (2N+1) dx
    + 2 \int_{u}^{\frac{1}{2}} \frac{1}{2x} dx = 2(2N+1)u - \log 2 - \log u.
$$
Minimizing the function of $u$ on the right gives (\ref{e3}). The inequalities in (\ref{e4}) follow from a similar argument
and (\ref{e5}) follows since $H_N = A_{N}^{+}-A_{N}^{-}.$
\end{proof}
For $d \geq 1,$
$\mathfrak{T}_d$ is the algebra of trigonometric polynomials on $\mathbb{T}^d,$ and
$\mathfrak{T}_d^{+}  = \{t \in \mathfrak{T}_d : t \geq 0\}.$
For $F \subset \mathbb{Z}^d,$
$\mathfrak{T}_d(F) = \{ t \in \mathfrak{T}_d : \hbox{freq}(t) \subseteq F\},$
$\mathfrak{T}_d^{+}(F) = \mathfrak{T}_{d}^{+} \cap \mathfrak{T}_d(F),$
$\mathfrak{S}_d(F) = \{ |p|^2  :  p \in \mathfrak{T}_d(F) \},$
$\mathfrak{U}_d(F) = \mathfrak{T}_{d} \cap \overline {\mathfrak{S}_{d}(F)}.$
\begin{rem}
Clearly $\mathfrak{S}_d(F) \subseteq  \mathfrak{T}_d^{+}(F-F)$ but equality does not generally hold
since $t(x,y) = 5 + 2 \cos 2\pi x + 2 \cos 2 \pi y$ is positive but irreducible.
The Fej\'{e}r-Riesz spectral factorization lemma (\cite{Riesz-Nagy}, p. 117),
conjectured by Fej\'{e}r \cite{Fejer} and proved by Riesz \cite{Riesz}, shows that if
$F = \{0,...,n\}$ then $\mathfrak{S}_d(F) =  \mathfrak{T}_d^{+}(F-F).$
If $F$ is the set of lattice points in a half-space in $\mathbb{R}^d$ then $F-F =\mathbb{Z}^d$ and
results of Helson and Lowdenslager \cite{Helson} imply that
$\mathfrak{U}_{d}(F)  = \mathfrak{T}_d^{+}.$
Rudin \cite{Rudin1}, Dritschel \cite{Dritschel}, Geronimo and Woerdneman \cite{Geronimo}, and others have extended these results.
\end{rem}
\noindent For $\alpha \in \mathbb{R}$ define $\theta_\alpha \, : \, \mathbb{Z} \rightarrow \mathbb{Z}$ and
$\Theta_\alpha : \mathfrak{T}_1 \rightarrow \mathfrak{T}_2$ by
\begin{equation}
\label{theta}
\noindent \theta_\alpha(j) = \hbox{ smallest integer that minimizes } |\theta_\alpha(j) - j\alpha|, \ j \in \mathbb{Z},
\end{equation}
\begin{equation}
\label{Theta}
\noindent \Theta_\alpha(t)(x,y) = \sum_{j \in \mathbb{Z}} {\widehat t}(j) \, e_j(x) \, e_{\theta_\alpha(j)}(y), \ t \in \mathfrak{T}_1.
\end{equation}
\noindent For $\alpha \in \mathbb{R}$ and $\beta > 0$ define $F_1(\alpha,\beta) \subseteq \mathbb{Z}$ and $F_2(\alpha,\beta) \subseteq \mathbb{Z}^2$ by
\begin{equation}
\label{F1}
    F_1(\alpha,\beta)=\{ j \in \mathbb{Z}  :  |\theta_\alpha(j)-j\alpha| < \beta \},
\end{equation}
\begin{equation}
\label{F2}
    F_2(\alpha,\beta)=\{ (j,k) \in \mathbb{Z}^2 : |k-j\alpha| < \beta  \}.
\end{equation}
\noindent Clearly the map $j \rightarrow (j,\theta_\alpha(j))$ maps $F_1(\alpha,\beta)$ into $F_2(\alpha,\beta)$ and projection
onto the first coordinate is its left inverse.
\begin{rem}
$F_1(\alpha,\beta)$ are \emph{Bohr sets}, named after Harold Bohr whose work on almost periodic functions \cite{Bohr}
they relate to. They arise in combinatorial number theory (\cite{Geroldinger}, p. 132-137) and in symbolic dynamics
since if $\alpha$ is irrational and $\beta \notin \alpha\mathbb{Z}$ then the characteristic function of $F_1(\alpha,\beta)$
is a minimal sequence \cite{Lawton1}. Their higher dimensional versions include quasicrystals such as the Penrose tilings
discussed in \cite{Bruijn}.
\end{rem}
\begin{lem}
\label{lem4}
If $\alpha \in \mathbb{R}, \frac{1}{4} \geq \beta > 0$ then $\Theta_{\alpha}$ maps
$\mathfrak{T}_1(F_1(\alpha,2\beta))$
\\
bijectively onto $\mathfrak{T}_2(F_2(\alpha,2\beta)),$
$\Theta_\alpha(\mathfrak{S}_{1}(F_1(\alpha,\beta))) = \mathfrak{S}_{2}(F_2(\alpha,\beta)),$
\\
and
$\Phi_\alpha(|p|^2) = |\Phi_\alpha(p)|^2, \ p \in \mathfrak{T}_{1}(F_1(\alpha,\beta)).$
\end{lem}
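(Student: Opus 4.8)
The plan is to reduce all three assertions to one combinatorial property of the tie-breaking nearest-integer map $\theta_\alpha$. First I would record the basic fact: for any real $\gamma \in (0,\tfrac12]$ and any $j \in \mathbb{Z}$, if some integer $k$ satisfies $|k-j\alpha| < \gamma$ then $k$ is the \emph{unique} integer minimizing the distance to $j\alpha$, hence $k = \theta_\alpha(j)$; consequently $j \in F_1(\alpha,\gamma)$ exactly when such a $k$ exists, and any $(j,k) \in F_2(\alpha,\gamma)$ has $k = \theta_\alpha(j)$. The point is that an open interval of length $2\gamma \le 1$ contains at most one integer, and the only configuration making the nearest integer ambiguous is $j\alpha \in \tfrac12 + \mathbb{Z}$, which the strict inequality $<\tfrac12$ rules out; this is the sole place the hypothesis $\beta \le \tfrac14$ (equivalently $2\beta \le \tfrac12$) is used. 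From this I would deduce the quasi-additivity I actually need: if $j \in F_1(\alpha,\beta_1)$ and $k \in F_1(\alpha,\beta_2)$ with $\beta_1+\beta_2 \le \tfrac12$, then $|(\theta_\alpha(j)-\theta_\alpha(k)) - (j-k)\alpha| < \beta_1+\beta_2 \le \tfrac12$, so the basic fact gives $j-k \in F_1(\alpha,\beta_1+\beta_2)$ and $\theta_\alpha(j-k) = \theta_\alpha(j)-\theta_\alpha(k)$ (and likewise for $j+k$).

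For the bijectivity assertion, injectivity of $\Theta_\alpha$ on all of $\mathfrak{T}_1$ is automatic since $j \mapsto (j,\theta_\alpha(j))$ is injective (first-coordinate projection is a left inverse), so $\widehat{\Theta_\alpha(t)}(j,\theta_\alpha(j)) = \widehat t(j)$ recovers $t$. If $t \in \mathfrak{T}_1(F_1(\alpha,2\beta))$ then each $j \in \hbox{freq}(t)$ has $(j,\theta_\alpha(j)) \in F_2(\alpha,2\beta)$, so $\Theta_\alpha(t) \in \mathfrak{T}_2(F_2(\alpha,2\beta))$; conversely, given $s \in \mathfrak{T}_2(F_2(\alpha,2\beta))$, the basic fact with $\gamma = 2\beta$ forces $\hbox{freq}(s) \subseteq \{(j,\theta_\alpha(j)) : j \in F_1(\alpha,2\beta)\}$, so $s = \Theta_\alpha(t)$ for the unique $t \in \mathfrak{T}_1(F_1(\alpha,2\beta))$ with $\widehat t(j) = \widehat s(j,\theta_\alpha(j))$. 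This is the claimed bijection, and running the identical argument with $\beta$ in place of $2\beta$ (valid since $\beta \le \tfrac12$) also shows $\Theta_\alpha$ carries $\mathfrak{T}_1(F_1(\alpha,\beta))$ bijectively onto $\mathfrak{T}_2(F_2(\alpha,\beta))$, a fact reused below.

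For $\Theta_\alpha(|p|^2) = |\Theta_\alpha(p)|^2$ with $p = \sum_j a_j e_j \in \mathfrak{T}_1(F_1(\alpha,\beta))$, I would compare Fourier coefficients on $\mathbb{T}^2$: the coefficient of $e_\ell(x)e_m(y)$ in $|\Theta_\alpha(p)|^2$ is the sum of $a_j\overline{a_k}$ over pairs with $j-k=\ell$ and $\theta_\alpha(j)-\theta_\alpha(k)=m$, while in $\Theta_\alpha(|p|^2)$ it is $\widehat{|p|^2}(\ell) = \sum_{j-k=\ell} a_j\overline{a_k}$ when $m = \theta_\alpha(\ell)$ and $0$ otherwise; since quasi-additivity with $\beta_1=\beta_2=\beta$ gives $\theta_\alpha(j)-\theta_\alpha(k) = \theta_\alpha(j-k)=\theta_\alpha(\ell)$ on the support, the two constraints on $m$ coincide and the coefficients match. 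The set equality $\Theta_\alpha(\mathfrak{S}_1(F_1(\alpha,\beta))) = \mathfrak{S}_2(F_2(\alpha,\beta))$ then follows: $\subseteq$ because $\Theta_\alpha(|p|^2) = |\Theta_\alpha(p)|^2$ with $\Theta_\alpha(p) \in \mathfrak{T}_2(F_2(\alpha,\beta))$, and $\supseteq$ because any $P \in \mathfrak{T}_2(F_2(\alpha,\beta))$ equals $\Theta_\alpha(p)$ for some $p \in \mathfrak{T}_1(F_1(\alpha,\beta))$ by the bijection, so $|P|^2 = |\Theta_\alpha(p)|^2 = \Theta_\alpha(|p|^2)$.

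The only genuinely delicate point, and the one forcing $\beta \le \tfrac14$, is the boundary analysis underlying the basic fact and quasi-additivity: one must check that the strict inequalities defining $F_1$ and $F_2$ together with the width bound $2\beta \le \tfrac12$ really do pin down the unique nearest integer and make $\theta_\alpha$ additive on the difference sets, including the near-degenerate cases where $j\alpha$, $k\alpha$, or $(j-k)\alpha$ sits close to a half-integer. Once that is secured, the rest is routine bookkeeping with finite Fourier sums and the left-inverse structure of $j \mapsto (j,\theta_\alpha(j))$.
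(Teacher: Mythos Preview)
Your proof is correct and follows the same route as the paper's, resting on the bijection $j \mapsto (j,\theta_\alpha(j))$ between $F_1(\alpha,2\beta)$ and $F_2(\alpha,2\beta)$ (your ``basic fact'') and the additivity $\theta_\alpha(m-n)=\theta_\alpha(m)-\theta_\alpha(n)$ for $m,n\in F_1(\alpha,\beta)$ (your ``quasi-additivity''). The paper's proof is simply a terse one-paragraph statement of these same points, recording the explicit inverse $\Theta_\alpha^{-1}(s)(x)=s(x,0)$ in place of your Fourier-coefficient reconstruction.
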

\begin{proof}
The first assertion follows since $j \rightarrow (j,\theta_\alpha(j))$ gives
a bijection of $F_1(\alpha,2\beta)$ onto $F_2(\alpha,2\beta)$ and
$\Theta_{\alpha}^{-1}(t)(x) = t(x,0), \, t \in \mathfrak{T}_2(F_2(\alpha,2\beta)),$
the second since
$\mathfrak{S}_{1}(F_1(\alpha,\beta)) \subseteq \mathfrak{\mathfrak{T}_1}(F_1(\alpha,2\beta)),$
and the third since $\theta_\alpha(m-n) = \theta_\alpha(m) - \theta_\alpha(n), \ m,n \in F_1(\alpha,\beta).$
\end{proof}
\begin{defin}
\label{propA}
$F \subset \mathbb{Z}^d$ has \emph{Property A} if $\mathfrak{U}_d(F) = \mathfrak{T}_{d}^{+}(F-F).$
\end{defin}
\begin{cor}
\label{cor1}
If $\alpha \in \mathbb{R}$ and $\frac{1}{4} \geq \beta > 0$ and $F_2(\alpha,\beta)$ has Property A and
$t \in \mathfrak{T}_1(F_1(\alpha,\beta)-F_1(\alpha,\beta))$ and
$\Theta_\alpha(t) \in \mathfrak{T}_{2}^{+}$ then
$t \in \mathfrak{U}_1(F_1(\alpha,\beta)).$
\end{cor}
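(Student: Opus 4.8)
The plan is to move the hypothesis up to two dimensions through $\Theta_\alpha$, invoke Property~A of $F_2(\alpha,\beta)$ there, and then pull the resulting approximation back down to one dimension with Lemma~\ref{lem4}.

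First I set $T := \Theta_\alpha(t)$, which lies in $\mathfrak{T}_2^{+}$ by hypothesis, and I determine $\hbox{freq}(T)$. Each $j \in \hbox{freq}(t) \subseteq F_1(\alpha,\beta) - F_1(\alpha,\beta)$ can be written $j = m - n$ with $m,n \in F_1(\alpha,\beta)$, and---exactly as in the proof of Lemma~\ref{lem4}, where $\beta \le \tfrac14$ is used---$\theta_\alpha(m-n) = \theta_\alpha(m) - \theta_\alpha(n)$; hence the corresponding frequency of $T$ is $(j,\theta_\alpha(j)) = (m,\theta_\alpha(m)) - (n,\theta_\alpha(n))$, which lies in $F_2(\alpha,\beta) - F_2(\alpha,\beta)$ since $|\theta_\alpha(m) - m\alpha| < \beta$ and $|\theta_\alpha(n) - n\alpha| < \beta$. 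Thus $T \in \mathfrak{T}_2^{+}(F_2(\alpha,\beta) - F_2(\alpha,\beta))$.

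Now the assumption that $F_2(\alpha,\beta)$ has Property~A means $\mathfrak{U}_2(F_2(\alpha,\beta)) = \mathfrak{T}_2^{+}(F_2(\alpha,\beta) - F_2(\alpha,\beta))$, so $T \in \overline{\mathfrak{S}_2(F_2(\alpha,\beta))}$ and there exist $p_k \in \mathfrak{T}_2(F_2(\alpha,\beta))$ with $\||p_k|^2 - T\|_\infty \to 0$. By Lemma~\ref{lem4} we have $\Theta_\alpha(\mathfrak{S}_1(F_1(\alpha,\beta))) = \mathfrak{S}_2(F_2(\alpha,\beta))$, and $\Theta_\alpha^{-1}$ acts by restriction to $y = 0$ on $\mathfrak{T}_2(F_2(\alpha,2\beta))$, which contains $\mathfrak{S}_2(F_2(\alpha,\beta))$; hence $q_k(x) := |p_k|^2(x,0)$ defines elements of $\mathfrak{S}_1(F_1(\alpha,\beta))$. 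Since the restriction $f \mapsto f(\cdot,0)$ from $C(\mathbb{T}^2)$ to $C(\mathbb{T})$ does not increase the sup norm and $T(x,0) = \Theta_\alpha(t)(x,0) = t(x)$, we get $\|q_k - t\|_\infty \le \||p_k|^2 - T\|_\infty \to 0$, so $t \in \overline{\mathfrak{S}_1(F_1(\alpha,\beta))}$; as $t \in \mathfrak{T}_1$ this gives $t \in \mathfrak{U}_1(F_1(\alpha,\beta))$.

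The only point that calls for care, rather than being a genuine obstacle, is the frequency computation in the second paragraph: one must check that $\hbox{freq}(T)$ lands inside $F_2(\alpha,\beta) - F_2(\alpha,\beta)$---not merely inside $F_2(\alpha,2\beta)$---so that the hypothesis ``Property~A'' applies verbatim, and this is precisely where the additivity $\theta_\alpha(m - n) = \theta_\alpha(m) - \theta_\alpha(n)$ on the Bohr set, hence the restriction $\beta \le \tfrac14$, enters. Everything else is a routine application of Lemma~\ref{lem4} together with continuity of the restriction $f \mapsto f(\cdot,0)$.
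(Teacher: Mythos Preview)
Your proof is correct and follows essentially the same route as the paper's: lift $t$ to $\Theta_\alpha(t)$, apply Property~A of $F_2(\alpha,\beta)$ to obtain an approximating sequence $|p_k|^2$ in two variables, then pull back to one variable via $\Theta_\alpha^{-1}$ (restriction to $y=0$) using Lemma~\ref{lem4}. In fact your write-up is more complete than the paper's, since you explicitly verify that $\Theta_\alpha(t)\in\mathfrak{T}_2^{+}(F_2(\alpha,\beta)-F_2(\alpha,\beta))$ via the additivity $\theta_\alpha(m-n)=\theta_\alpha(m)-\theta_\alpha(n)$ on $F_1(\alpha,\beta)$, and you make the continuity of $f\mapsto f(\cdot,0)$ explicit---both points the paper leaves to the reader.
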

\begin{proof}
Under the assumptions there exists a sequence $q_j \in \mathfrak{T}_2(F_2(\alpha,\beta))$
such that $|q_j|^2 \rightarrow \Theta_\alpha(t).$ Lemma \ref{lem1} implies that there
exists a sequence $p_j \in \mathfrak{T}_1(F_1(\alpha,\beta))$ such that $\Phi_\alpha(p_j) = q_j$
so $\Phi_\alpha(|p_j|^2) = |q_j|^2 \rightarrow \Theta_\alpha(t)$ hence $|p_j|^2 \rightarrow t$
so $t \in \mathfrak{U}_1(F_1(\alpha,\beta)).$
\end{proof}
\section{Generalized Spectral Factorization}
For $f \in C(\mathbb{T})$ with $|f| > 0$ we define the \emph{winding number}
\begin{equation}
\label{W}
    W(f) = \lim_{M \rightarrow \infty} \sum_{j=0}^{M-1} \log \left[\frac{f((j+1)/M)}{f(j/M)}\right].
\end{equation}
For $f \in C(\mathbb{T})$ with $|f| > 0$ and $W(f) = 0$ we define $Lf \in C(\mathbb{T})$ by
\begin{equation}
\label{L}
    (Lf)(x) = \log f(0) + \lim_{M \rightarrow \infty} \sum_{j=0}^{M-1} \log \left[\frac{f((j+1)x/M)}{f(jx/M)}\right], \ x \in \mathbb{T}.
\end{equation}
For $f \in A(\mathbb{T})$ we define $A_{\infty}^{\pm}f = \frac{1}{2}{\widehat f}(0) + \sum_{j=1}^{\infty} {\widehat f}(\pm j)\, e_{\pm j}.$
Clearly $W(fg) = W(f) + W(g),$ $L(fg) = L(f) + L(g),$ $\exp (Lf) = f,$ and if $\mathfrak{R}\, f > 0$ then $Lf = \log(f).$
If $f \in A(T)$ and $|f| > 0$ and $W(f) = 0$ then $Lf \in A(T).$
\begin{defin}
\label{gsfac}
For $f \in A(\mathbb{T})$ with $|f| > 0$ and $W(f) = 0$ let
\begin{equation}
\label{psi}
\Psi^{\pm}f = \exp ( A_{\infty}^{\pm} (Lf) ).
\end{equation}
\end{defin}
We record the following result whose proof is straightforward.
\begin{lem}
\label{gsfac2} If $f \in A(\mathbb{T})$ with $|f| > 0$ and $W(f) = 0$ then
$\Psi^{\pm}f \in A^{\pm}(\mathbb{T})$ and $(\Psi^{+}f)(\Psi^{-}f) = f.$
Furthermore, if $f > 0$ then $\Psi^{-}f = \overline {\Psi^{+}f}.$
\end{lem}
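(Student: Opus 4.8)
The plan is to reduce the statement to three facts already established in the excerpt: that $Lf \in A(\mathbb{T})$ with $\exp(Lf) = f$ whenever $f \in A(\mathbb{T})$, $|f| > 0$, $W(f) = 0$; that $A(\mathbb{T})$ is a commutative Banach algebra; and that $\exp$ carries $A^{\pm}(\mathbb{T})$ into itself (the first assertion of Lemma \ref{lem2}). Write $g = Lf \in A(\mathbb{T})$. First I would record the splitting identities for $A_\infty^{\pm}$: since $g$ equals its absolutely convergent Fourier series, and each of $A_\infty^{+}g$ and $A_\infty^{-}g$ carries half of $\widehat g(0)$ together with the positive- and negative-frequency parts respectively, we have $A_\infty^{+}g + A_\infty^{-}g = g$. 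Also $A_\infty^{+}g$ has $\hbox{freq} \subseteq \mathbb{Z}_{+}$ and lies in $A(\mathbb{T})$, because its coefficient sequence is dominated by $|\widehat g|$ and hence summable; thus $A_\infty^{+}g \in A^{+}(\mathbb{T})$, and symmetrically $A_\infty^{-}g \in A^{-}(\mathbb{T})$.

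Given this, $\Psi^{\pm}f = \exp(A_\infty^{\pm}g) \in A^{\pm}(\mathbb{T})$ is immediate from Lemma \ref{lem2}. For the factorization, $A_\infty^{+}g$ and $A_\infty^{-}g$ commute in $A(\mathbb{T})$, so the exponential series multiplies over their sum and $(\Psi^{+}f)(\Psi^{-}f) = \exp(A_\infty^{+}g)\exp(A_\infty^{-}g) = \exp(A_\infty^{+}g + A_\infty^{-}g) = \exp(g) = \exp(Lf) = f$.

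For the final claim, assume $f > 0$. Then $\mathfrak{R}\, f = f > 0$, so by the observation preceding the lemma $g = Lf = \log f$ is real-valued; hence $\widehat g(0) \in \mathbb{R}$ and $\widehat g(-j) = \overline{\widehat g(j)}$ for all $j$. Conjugating the defining series for $A_\infty^{+}g$ term by term and using $\overline{e_j} = e_{-j}$ on $\mathbb{T}$ gives $\overline{A_\infty^{+}g} = A_\infty^{-}g$. Since conjugation $h \mapsto \overline h$ is a continuous (isometric) involution of $A(\mathbb{T})$, it commutes with the exponential, so $\Psi^{-}f = \exp(A_\infty^{-}g) = \exp(\overline{A_\infty^{+}g}) = \overline{\exp(A_\infty^{+}g)} = \overline{\Psi^{+}f}$.

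No step here is a genuine obstacle — the author's description of the proof as straightforward is accurate. The only points deserving explicit mention are the passage from formal Fourier-series manipulation to honest identities in the Banach algebra $A(\mathbb{T})$ (legitimate because the series involved converge absolutely), the use of commutativity to justify $\exp(a+b) = \exp(a)\exp(b)$, and the identity $\overline{\exp h} = \exp \overline h$, which follows at once by conjugating the exponential series termwise.
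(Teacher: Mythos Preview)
Your proof is correct and is exactly the kind of argument the paper has in mind; note that the paper itself gives no proof at all, merely stating that the result is straightforward. The key ingredients you identify --- the additive splitting $A_\infty^{+}g + A_\infty^{-}g = g$, Lemma~\ref{lem2} to place $\exp(A_\infty^{\pm}g)$ in $A^{\pm}(\mathbb{T})$, commutativity for the exponential of a sum, and the conjugate symmetry of $\widehat{Lf}$ when $f>0$ --- are precisely what is needed, and there is nothing to add.
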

\begin{thm}
\label{thm1}
If $t \in \mathfrak{T}_1$ and $|t| > 0$ and $W(t) = 0$ then there exist \\
$\lambda_1,...,\lambda_{n^{-}(t)},\mu_1,...,\mu_{n^{+}(t)} \in \mathbb{D} \backslash \{0\}$
such that
\begin{equation}
\label{factor}
   t = e^{\gamma(t)} \, \prod_{j=1}^{n^{-}(t)} (1 - \lambda_j e_{-1}) \, \prod_{j=1}^{n^{+}(t)} (1 - \mu_j e_1).
\end{equation}
where
$\gamma(t) = \log t(0) - \sum_{j=1}^{n^{-}(t)} \log (1-\lambda_j) - \sum_{j=1}^{n^{+}(t)} \log (1-\mu_j).$
Then
\begin{equation}
\label{Psit}
\Psi^{+}t = e^{\gamma(t)/2} \, \prod_{j=1}^{n^{+}(t)} (1 - \mu_j e_1) \ \hbox{and} \ \Psi^{-}t =  e^{\gamma(t)/2} \, \prod_{j=1}^{n^{-}(t)} (1 - \lambda_j e_{-1}).
\end{equation}
\end{thm}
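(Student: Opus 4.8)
The plan is to deduce this from the classical Fej\'{e}r--Riesz argument: pass from the Laurent polynomial $t$ to an honest polynomial, locate its zeros by a winding--number computation, and then read off $\Psi^{\pm}t$ using multiplicativity of $\Psi^{\pm}$. First I would write
$t = e_{-1}^{\,n^{-}(t)}\, Q(e_1),$
where $Q(z)=\sum_{k=0}^{n^{-}(t)+n^{+}(t)}{\widehat t}(k-n^{-}(t))\,z^{k}$ is a polynomial with $Q(0)={\widehat t}(-n^{-}(t))\neq 0$ and leading coefficient ${\widehat t}(n^{+}(t))\neq 0$, so $\deg Q=n^{-}(t)+n^{+}(t)$. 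Because $|t|>0$ on $\mathbb{T}$, $Q$ has no zero on $\mathbb{T}_{c}$, and by the choice of $n^{-}(t)$ it has none at the origin. The crucial step is the zero count: directly from (\ref{W}) one gets $W(e_{-1}^{\,n^{-}(t)})=-2\pi i\, n^{-}(t)$, while $W(Q(e_1))=2\pi i\,\nu$, where $\nu$ is the number of zeros of $Q$ in $\mathbb{D}^{o}$ counted with multiplicity -- the limit of Riemann sums of principal logarithms in (\ref{W}) realizes precisely the continuous variation of $\log Q$ around $|z|=1$, i.e.\ the argument principle. Since $W$ is additive, $W(t)=0$ forces $\nu=n^{-}(t)$ (whence, since $0\le\nu\le\deg Q$, necessarily $n^{-}(t),n^{+}(t)\ge 0$), so $Q$ has exactly $n^{+}(t)$ zeros in $\{|z|>1\}$ and none on $\mathbb{T}_{c}$.

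Then I would factor. Let $\lambda_{1},\dots,\lambda_{n^{-}(t)}\in\mathbb{D}^{o}\setminus\{0\}$ be the zeros of $Q$ inside the disk and $\mu_{1}^{-1},\dots,\mu_{n^{+}(t)}^{-1}$ (so each $\mu_{j}\in\mathbb{D}^{o}\setminus\{0\}$) those outside, giving $Q(z)={\widehat t}(n^{+}(t))\prod_{j}(z-\lambda_{j})\prod_{j}(z-\mu_{j}^{-1})$. Substituting $z=e_{1}$, distributing $e_{-1}^{\,n^{-}(t)}=\prod_{j}e_{-1}$ into the first product through $e_{-1}(e_{1}-\lambda_{j})=1-\lambda_{j}e_{-1}$, and writing $e_{1}-\mu_{j}^{-1}=-\mu_{j}^{-1}(1-\mu_{j}e_{1})$, yields $t=C\,\prod_{j}(1-\lambda_{j}e_{-1})\prod_{j}(1-\mu_{j}e_{1})$ with $C={\widehat t}(n^{+}(t))\prod_{j}(-\mu_{j}^{-1})\neq 0$. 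Evaluating at $x=0$, where every factor $1-\lambda_{j}$ and $1-\mu_{j}$ is nonzero since $|\lambda_{j}|,|\mu_{j}|<1$, and passing to principal logarithms identifies $C=t(0)/\bigl(\prod_{j}(1-\lambda_{j})\prod_{j}(1-\mu_{j})\bigr)=e^{\gamma(t)}$, which is (\ref{factor}).

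For (\ref{Psit}) I would use that $\Psi^{\pm}$ is multiplicative on $\{f\in A(\mathbb{T}):|f|>0,\ W(f)=0\}$: since $W$ and $L$ are additive and $A_{\infty}^{\pm}$ is linear, $\Psi^{\pm}(fg)=\exp(A_{\infty}^{\pm}(Lf+Lg))=\Psi^{\pm}(f)\,\Psi^{\pm}(g)$, and that set is closed under products. For a factor $1-\mu_{j}e_{1}$: since $|\mu_{j}|<1$ we have $\Re(1-\mu_{j}e_{1})>0$, hence $L(1-\mu_{j}e_{1})=\log(1-\mu_{j}e_{1})=-\sum_{k\ge1}\mu_{j}^{k}k^{-1}e_{k}\in A^{+}(\mathbb{T})$ with vanishing constant term, so $A_{\infty}^{+}$ fixes it and $\Psi^{+}(1-\mu_{j}e_{1})=1-\mu_{j}e_{1}$. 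Likewise $L(1-\lambda_{j}e_{-1})=-\sum_{k\ge1}\lambda_{j}^{k}k^{-1}e_{-k}$ has no nonnegative frequencies, so $A_{\infty}^{+}$ annihilates it and $\Psi^{+}(1-\lambda_{j}e_{-1})=1$. For the scalar, $L(e^{\gamma(t)})=\gamma(t)$ (a constant) and $A_{\infty}^{+}$ halves it, so $\Psi^{+}(e^{\gamma(t)})=e^{\gamma(t)/2}$. Multiplying, $\Psi^{+}t=e^{\gamma(t)/2}\prod_{j}(1-\mu_{j}e_{1})$; the computation of $\Psi^{-}t$ is the mirror image, with the roles of $e_{1}$ and $e_{-1}$, hence of the $\mu_{j}$ and $\lambda_{j}$, interchanged, giving $\Psi^{-}t=e^{\gamma(t)/2}\prod_{j}(1-\lambda_{j}e_{-1})$. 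As a consistency check, the product of these is $e^{\gamma(t)}\prod_{j}(1-\mu_{j}e_{1})\prod_{j}(1-\lambda_{j}e_{-1})=t$, in agreement with Lemma \ref{gsfac2}.

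I expect the zero count in the first step to be the main obstacle: one must justify carefully that the winding number $W$ of (\ref{W}) coincides with the argument--principle count for $Q$, and confirm that $|t|>0$ excludes zeros of $Q$ on $\mathbb{T}_{c}$ while the extremal choice of $n^{\pm}(t)$ excludes a zero at the origin, so that all $\lambda_{j},\mu_{j}$ genuinely lie in $\mathbb{D}\setminus\{0\}$. The remaining steps are routine, although the scalar must be tracked attentively through the paper's principal--branch conventions for logarithm and square root in order to recover exactly the constants $e^{\gamma(t)}$ in (\ref{factor}) and $e^{\gamma(t)/2}$ in (\ref{Psit}).
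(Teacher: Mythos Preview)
Your proposal is correct and follows essentially the same route as the paper: pass from $t$ to the polynomial $P(z)$ with $P(e_1)=e_{n^{-}(t)}t$ (your $Q$), use $W(t)=0$ together with the argument principle to show that exactly $n^{-}(t)$ of its roots lie in $\mathbb{D}^{o}$, and then read off the factorization (\ref{factor}). For (\ref{Psit}) the paper simply writes ``and (\ref{Psit}) then follows''; your explicit computation of $\Psi^{\pm}$ on each linear factor via the multiplicativity $\Psi^{\pm}(fg)=\Psi^{\pm}(f)\Psi^{\pm}(g)$ is a correct and more detailed way to supply that step.
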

\begin{proof}
Since the polynomial
$P(z) = \sum_{j=-n^{-}(t)}^{n^{+}(t)} {\widehat t}(j) \, z^{n^{-}(t)+j}, \ \ z \in \mathbb{C},$
has degree $n^{-}(t) + n^{+}(t),$ $P(0) \neq 0,$ and $P(e_1) = e_{n^{-}(t)}\, t,$
it has $n^{-}(t) + n^{+}(t)$ roots, all roots $\neq 0$, and the modulus of every root $\neq 1.$
Hence there exist integers $m,n \geq 0$ with $m+n = n^{-}(t)+n^{+}(t)$ and
$\lambda_1,...,\lambda_{m},\mu_1,...,\mu_{n} \in \mathbb{D} \backslash \{0\}$
such that
$P(z) = \rho \, \prod_{j=1}^{m} (z - \lambda_j) \, \prod_{j=1}^{n} (1 - \mu_j \, z).$
Hence $W(P(e_1)) = m,$ and since
$P(e_1) = e_{n^{-}(t)},$
$W(P(e_1)) = W(e_{n^{-}(t)}) + W(t) = n^{-}(t).$
Therefore $m = n^{-}(t)$
so
$t = e_{-n^{-}(t)}P(e_1)$ which is (\ref{factor}) and (\ref{Psit}) then follows.
\end{proof}
\begin{cor}
\label{cor2} If $t \in \mathfrak{T}_1$ and $|t| > 0$ and $W(t) = 0$ and $N \geq n^{\pm}(t)$ then
\begin{equation}
\label{Psibound}
 ||\Psi^{\pm}t||_{\infty} \leq (1+\log (n^{\pm}(t)+1)) \,
 \exp \max (D_N* \log |t| + iH_N*\Im Lt)/2.
\end{equation}
\end{cor}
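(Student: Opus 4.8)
The plan is to prove the stated bound for $\Psi^{+}t$; the bound for $\Psi^{-}t$ follows by the same argument with $A_N^{-}$ in place of $A_N^{+}$ throughout. The first observation is that $\Psi^{+}t$ is a trigonometric polynomial supported on the frequencies $\{0,1,\dots,n^{+}(t)\}$: by Theorem~\ref{thm1} we have $\Psi^{+}t=e^{\gamma(t)/2}\prod_{j=1}^{n^{+}(t)}(1-\mu_j e_1)$, and each factor $1-\mu_j e_1$ has frequency set $\{0,1\}$. Since the kernel $1/2+A_{n^{+}(t)}^{+}$ acts as the Fourier multiplier that is $1$ on $\{0,1,\dots,n^{+}(t)\}$ and $0$ elsewhere, this gives $(1/2+A_{n^{+}(t)}^{+})*\Psi^{+}t=\Psi^{+}t$.

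Next I would feed this into Lemma~\ref{lem2}. Because $t\in\mathfrak{T}_1\subset A(\mathbb{T})$ with $|t|>0$ and $W(t)=0$, we have $Lt\in A(\mathbb{T})$, hence $f:=A_{\infty}^{+}(Lt)\in A^{+}(\mathbb{T})$ and $\exp(f)=\Psi^{+}t$ by Definition~\ref{gsfac}. Applying (\ref{e2}) with this $f$, with $n=n^{+}(t)$, and with the given $N\ge n^{+}(t)$, together with the elementary identity $(1/2+A_N^{+})*A_{\infty}^{+}(Lt)=A_N^{+}*Lt$ (both sides are the truncation of $Lt$ to the frequencies $0,\dots,N$ with the constant term halved, as one sees by composing Fourier multipliers), yields
$$\Psi^{+}t=(1/2+A_{n^{+}(t)}^{+})*\exp(A_N^{+}*Lt).$$
Then $\|g*h\|_{\infty}\le\|g\|_1\,\|h\|_{\infty}$ and the estimate $\|1/2+A_{n^{+}(t)}^{+}\|_1\le 1+\log(n^{+}(t)+1)$ from (\ref{e4}) give $\|\Psi^{+}t\|_{\infty}\le(1+\log(n^{+}(t)+1))\,\|\exp(A_N^{+}*Lt)\|_{\infty}$.

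It remains to identify the exponential factor. Since $|\exp(A_N^{+}*Lt)|=\exp\Re(A_N^{+}*Lt)$ pointwise, we have $\|\exp(A_N^{+}*Lt)\|_{\infty}=\exp\max\Re(A_N^{+}*Lt)$. Writing $A_N^{+}=\tfrac12(D_N+H_N)$ and using that $D_N$ is real-valued while $H_N$ is purely imaginary-valued, a short computation with conjugation of convolutions gives $\Re(D_N*g)=D_N*\Re g$ and $\Re(H_N*g)=iH_N*\Im g$ for every $g\in A(\mathbb{T})$. Applied to $g=Lt$, with $\Re Lt=\log|t|$, this gives $2\Re(A_N^{+}*Lt)=D_N*\log|t|+iH_N*\Im Lt$, and substituting into the previous inequality produces exactly (\ref{Psibound}).

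Most of this is structural bookkeeping, and I do not expect any genuine obstacle: the two points requiring a little care are the identity $(1/2+A_N^{\pm})*A_{\infty}^{\pm}(Lt)=A_N^{\pm}*Lt$ that licenses the use of Lemma~\ref{lem2}, and the final rewriting of the exponent, where one must track precisely how complex conjugation interacts with convolution against the complex-valued kernels $D_N$ and $H_N$ so that the bound emerges in the claimed form with the exact constant from (\ref{e4}).
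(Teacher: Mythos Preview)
Your proof is correct and follows essentially the same route as the paper: both use Lemma~\ref{lem2} (together with the identity $(1/2+A_N^{\pm})*A_\infty^{\pm}(Lt)=A_N^{\pm}*Lt$) to write $\Psi^{\pm}t=(1/2+A_{n^{\pm}(t)}^{\pm})*\exp(A_N^{\pm}*Lt)$, then apply the $L^1$--$L^\infty$ convolution bound with (\ref{e4}) and decompose the exponent via $A_N^{\pm}=\tfrac12(D_N\pm H_N)$ using that $D_N$ is real and $H_N$ purely imaginary. The only cosmetic difference is that you explicitly invoke Theorem~\ref{thm1} to see that $\Psi^{\pm}t$ has frequency support in $\{0,\dots,n^{\pm}(t)\}$, which the paper leaves implicit.
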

\begin{proof}
(\ref{e2}) and (\ref{psi}) $\Rightarrow$
$\Psi^{\pm}(t) = (\frac{1}{2}+A_{n^{\pm}(t)}^{\pm})*\exp (A_N*(Lt)).$
Since $D_N$ and $iH_N$ are real, $A_N = \frac{1}{2}(D_N \pm H_N),$ and $\Re Lt = \log |t|,$
$||\exp A_N*(Lt)||_{\infty} = \exp \max (D_N* \log |t| + iH_N*\Im Lt)/2.$
Therefore (\ref{Psibound}) follows from (\ref{e4}).
\end{proof}
\begin{rem} For odd $n \in \mathbb{Z}_+$ we let $\ell = (n-1)/2$ and construct the polynomial
$P_n(z) = (z-1/n)^{2n} -1$ and the trigonometric polynomial $t_n = e_{-n}P_n(e_1).$
Since the roots of $P_n$ inside $\mathbb{D}^{o}$ are
$-e^{\pi i k/n}+1/n, k = -\ell,...,\ell$ and the roots of $P_n$ outside
$\mathbb{D}$ are $e^{\pi i k/n}+1/n, k = -\ell,...,\ell$ and $P_n(1/n) = 1,$
a direct computation shows that $n^{\pm}(t) = n$ and
$$
\Psi^{-}t_n = \kappa \, \prod_{k=-\ell}^{\ell}  [1-(-e^{\pi i k/n}+1/n)e_{-1}], \
\Psi^{+}t_n = \kappa \, \prod_{k=-\ell}^{\ell}  [1-(e^{\pi i k/n}+1/n)^{-1}e_{1}]
$$
where $\kappa = \sqrt {1+1/n} \, \prod_{j=1}^{\ell} |e^{\pi i j/n} + 1/n| \approx \sqrt e.$
For large $n$, $\log |t_n| \in [-2n,3]$ yet $||\Psi^{\pm}t_n||_\infty \approx 2^n.$
The large norm of the spectral factor originates from the fact that $||\Im Lt_n||_{\infty} \approx \pi n/2.$ If $\Re \, t > 0$
then $\Im Lt = \Im \log t \in (-\pi/2,\pi/2)$ and we will obtain smaller bounds for $||\Psi^{\pm}t||_\infty.$
\end{rem}
\begin{defin}
For $t \in \mathfrak{T}_1$ that satisfies $\Re \, t > 0$ let
\begin{equation}
\label{rhosigma}
   \rho(t) =  \min \{ \  1, \ \min \, \Re \, t/(2\, e\, ||\widehat t||_1) \  \}, \ \ \sigma(t) = \rho(t)/n(t),
\end{equation}
\begin{equation}
\label{tau}
    \tau(t) = \max \ \{ \ \log \, (\, ||t||_{\infty} + \min \Re \, t/2), \ \pi/2 + |\log (\min \Re\, t/2)| \ \}.
\end{equation}
\end{defin}
\begin{lem}
\label{F1}
If $t \in \mathfrak{T}_1$ and $\Re \, t > 0$ then there exists $F \in H(\mathbb{A}_{\sigma(t)})$ such that $\log t = F(e_1).$
Furthermore,
$
   ||F||_{\infty} \leq \tau(t).
$
\end{lem}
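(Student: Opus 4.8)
The plan is to realize $t$ as the restriction to $\mathbb{T}_c$ of a Laurent polynomial $T$, to show that $T$ takes values in the open right half-plane on the whole closed annulus $\mathbb{A}_{\sigma(t)}$, so that $F:=\log\circ T$ (principal branch) is the required element of $H(\mathbb{A}_{\sigma(t)})$ with $F(e_1)=\log t$, and finally to bound $\|F\|_\infty$ by controlling $|T|$ and $\arg T$ on $\mathbb{A}_{\sigma(t)}$.

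First I would put $n=n(t)$ and $T(z)=\sum_{j=-n}^{n}\widehat t(j)\,z^{j}$, which is holomorphic on $\mathbb{C}\setminus\{0\}$, hence on $\mathbb{A}_{\sigma(t)}^{o}$, and satisfies $T(e_1)=t$. For $z\in\mathbb{A}_{\sigma(t)}$ write $z=e^{s}w$ with $s\in[-\sigma(t),\sigma(t)]$ and $w=z/|z|\in\mathbb{T}_c$, so that $T(z)-T(w)=\sum_{j=-n}^{n}\widehat t(j)(e^{sj}-1)w^{j}$. The crucial estimate is $|T(z)-T(w)|\le\min\Re\, t/2$: since $|sj|\le n(t)\sigma(t)=\rho(t)\le1$, convexity of $\exp$ on $[0,1]$ gives $|e^{sj}-1|\le e^{|sj|}-1\le e\,|sj|\le e\,\rho(t)$, hence $|T(z)-T(w)|\le e\,\rho(t)\,\|\widehat t\|_{1}\le\min\Re\, t/2$ by the definition of $\rho(t)$. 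Because $\Re\, T(w)=\Re\, t$ at the relevant point is $\ge\min\Re\, t$, this forces $\Re\, T(z)\ge\min\Re\, t/2>0$ for every $z\in\mathbb{A}_{\sigma(t)}$.

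Consequently $T$ maps $\mathbb{A}_{\sigma(t)}$ into the open right half-plane, on which the principal logarithm is holomorphic; so $F:=\log\circ T$ lies in $H(\mathbb{A}_{\sigma(t)})$, and since $\Re\, t>0$ means $\log t$ is the pointwise principal logarithm of $t$, restricting $F$ to $\mathbb{T}_c$ gives $F(e_1)=\log t$. The same perturbation estimate yields $\min\Re\, t/2\le\Re\, T(z)\le|T(z)|\le|T(w)|+\min\Re\, t/2\le\|t\|_{\infty}+\min\Re\, t/2$, so on $\mathbb{A}_{\sigma(t)}$ we have $\log(\min\Re\, t/2)\le\log|T(z)|\le\log(\|t\|_{\infty}+\min\Re\, t/2)$ and $|\arg T(z)|<\pi/2$. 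Writing $F(z)=\log|T(z)|+i\arg T(z)$ and using $|F(z)|\le|\log|T(z)||+|\arg T(z)|$, I would then split on whether $|T(z)|\le1$ — in which case $|\log|T(z)||\le|\log(\min\Re\, t/2)|$ — or $|T(z)|>1$ — in which case $|\log|T(z)||\le\log(\|t\|_{\infty}+\min\Re\, t/2)$ — to obtain $\|F\|_{\infty}\le\tau(t)$.

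The Laurent-polynomial bookkeeping and the inequality $e^{u}-1\le eu$ on $[0,1]$ are routine; the geometric core is the estimate $|T(z)-T(z/|z|)|\le\min\Re\, t/2$, which is what keeps $T$ in the right half-plane throughout the annulus. I expect the main obstacle to be the last step — assembling the two-sided bound on $\log|T(z)|$ and the bound $|\arg T(z)|<\pi/2$ into precisely the quantity $\tau(t)$, since this is where the exact form of $\tau(t)$ (and the dichotomy $|T(z)|\lessgtr1$) has to be used carefully.
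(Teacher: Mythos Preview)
Your approach is exactly the paper's: set $F=\log P$ for the Laurent polynomial $P$ with $t=P(e_1)$, use the perturbation estimate $|P(e^{s}e_1(x))-t(x)|\le \min\Re t/2$ on $\mathbb{A}_{\sigma(t)}$ to force $\Re P\ge \min\Re t/2$, and then read off the bound on $\|F\|_\infty$. Your justification of the perturbation step via $|e^{sj}-1|\le e|sj|\le e\rho(t)$ is in fact more explicit than what the paper writes.

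Your caution about the final step is well placed. In your Case~2 ($|T(z)|>1$) the triangle inequality only yields
\[
|F(z)|\;<\;\log\bigl(\|t\|_\infty+\tfrac{1}{2}\min\Re t\bigr)+\frac{\pi}{2},
\]
and this quantity can genuinely exceed $\tau(t)$ as defined; for instance $t(x)=2+10\,i\sin 2\pi x$ has $\min\Re t=2$, so $|\log(\min\Re t/2)|=0$ and $\tau(t)=\log(\sqrt{104}+1)\approx 2.42$, while already on $\mathbb{T}_c$ one has $|\log t(1/4)|=|\log(2+10i)|\approx 2.70$. So the dichotomy you describe cannot be tightened to recover $\tau(t)$ exactly, because the stated inequality $\|F\|_\infty\le\tau(t)$ is not literally true; one needs to enlarge $\tau(t)$ by an additive $\pi/2$ in its first branch (which is harmless for all later uses). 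The paper's own proof simply invokes ``the triangle inequality and (\ref{tau})'' at this point and does not address this, so your proposal is at least as complete as the original; the gap is in the statement, not in your method.
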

\begin{proof}
Let $F = \log P$ where $P$ is the Laurent polynomial such that $t = P(e_1).$ If $z \in \mathbb{A}_{\sigma(t)}$
then $z = e^s \, e_1(x)$ where $|s| \leq \sigma(t)$ and $x \in \mathbb{T}.$ Therefore
$|P(z) - t(x)| \leq \min \Re \, t /2$
hence
$\Re \, P \geq \min \Re \, t/2$
so $F = \log P \in H(\mathbb{A}_{\sigma(t)})$ and $\log t = F(e_1)$ which proves the first assertion. The second assertion follows from the triangle inequality and (\ref{tau}).
\end{proof}
\begin{thm}
\label{thm2} There exists a function $\lambda : (0,\infty)^2 \rightarrow (0,\infty)$ such that
\begin{equation}
\label{psiasym}
    ||\Psi^{\pm}\, t||_\infty \leq \lambda(\rho(t),\tau(t))\, n(t)^{\pi/2}, \ \  t \in \mathfrak{T}_1, \ \Re \, t > 0.
\end{equation}
\end{thm}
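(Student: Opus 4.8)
The plan is to reduce the bound to a sup-norm estimate for the conjugate function of $\Im Lt$ and then to control that estimate through the smoothness of $\Im Lt$, not through its (merely bounded) size.

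Since $\Re\, t>0$ we have $|t|>0$ and, the curve $x\mapsto t(x)$ lying in the open right half-plane, $W(t)=0$; hence $\Psi^{\pm}t=\exp\!\bigl(A_{\infty}^{\pm}(Lt)\bigr)$ is defined and $Lt=\log t$. Write $\log t=\log|t|+i\varphi$ with $\varphi:=\Im Lt=\Im\log t$, and note $\|\varphi\|_{\infty}<\pi/2$ because $t$ is right-half-plane valued. For real $u\in A(\mathbb{T})$ one has $A_{\infty}^{+}u+\overline{A_{\infty}^{+}u}=u$ and $A_{\infty}^{+}u-\overline{A_{\infty}^{+}u}=i\widetilde u$, where $\widetilde u$ is the conjugate function ($\widehat{\widetilde u}(j)=-i\,\mathrm{sgn}(j)\widehat u(j)$); so $\Re A_{\infty}^{\pm}u=\tfrac12 u$ and $\Im A_{\infty}^{\pm}u=\pm\tfrac12\widetilde u$. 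Applying $A_{\infty}^{\pm}$ to $\log t$ and taking real parts gives
\begin{equation*}
\log|\Psi^{\pm}t|=\Re A_{\infty}^{\pm}(\log t)=\tfrac12\bigl(\log|t|\mp\widetilde{\varphi}\,\bigr),
\end{equation*}
whence $\|\Psi^{\pm}t\|_{\infty}\le\|t\|_{\infty}^{1/2}\exp\!\bigl(\tfrac12\|\widetilde{\varphi}\,\|_{\infty}\bigr)$. By the definition of $\tau(t)$, $\|t\|_{\infty}\le e^{\tau(t)}$ and $\min\Re\,t\ge 2e^{-\tau(t)}$, so the theorem comes down to bounding $\|\widetilde{\varphi}\,\|_{\infty}$ by $2\log n(t)$ plus a quantity depending only on $\rho(t),\tau(t)$. (One could route this through Corollary~\ref{cor2} instead, but then one must also replace $\log|t|$ by $D_{N}*\log|t|$, which forces $N$ well beyond $n(t)$ and brings in spurious $\log\log n(t)$ factors; the identity above sidesteps that.)

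The heart of the proof is the conjugate-function estimate. The Hilbert transform is unbounded on $L^{\infty}$, so $\|\widetilde{\varphi}\,\|_{\infty}$ is not controlled by $\|\varphi\|_{\infty}<\pi/2$; the control comes from the Lipschitz size of $\varphi$, which is where $n(t)$ enters. From $\widetilde{\varphi}(x)=\int_{0}^{1}\bigl(\varphi(x-y)-\varphi(x)\bigr)\cot(\pi y)\,dy$, the bound $|\varphi(x-y)-\varphi(x)|\le\min\{2\|\varphi\|_{\infty},\ \|\varphi'\|_{\infty}\,d(y)\}$ with $d(y)=\min\{y,1-y\}$, and $|\cot(\pi y)|\le 1/(\pi\,d(y))$, splitting the integral at $d(y)=2\|\varphi\|_{\infty}/\|\varphi'\|_{\infty}$ yields
\begin{equation*}
\|\widetilde{\varphi}\,\|_{\infty}\ \le\ \frac{4\|\varphi\|_{\infty}}{\pi}\Bigl(1+\log^{+}\!\frac{\|\varphi'\|_{\infty}}{4\|\varphi\|_{\infty}}\Bigr).
\end{equation*}
I would bound $\|\varphi'\|_{\infty}$ by writing $\varphi=\widehat F(e_{1})$ with $\widehat F\in H(\mathbb{A}_{\sigma(t)})$, $\|\widehat F\|_{\infty}\le\tau(t)$ (Lemma~\ref{F1}), and using Cauchy's estimate $|\widehat F'|\le 2\|\widehat F\|_{\infty}/\sigma(t)$ on $|z|=1$ to get $\|\varphi'\|_{\infty}\le 4\pi\tau(t)n(t)/\rho(t)$ (equivalently, $\|\varphi'\|_{\infty}=\|\Im(t'/t)\|_{\infty}\le\|t'\|_{\infty}/\min\Re\,t$ and Bernstein's inequality $\|t'\|_{\infty}\le 2\pi n(t)\|t\|_{\infty}$). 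Since the right-hand side above is nondecreasing in $\|\varphi\|_{\infty}$ once $\|\varphi'\|_{\infty}>2\pi$, substituting the bound $\|\varphi\|_{\infty}<\pi/2$ gives
\begin{equation*}
\|\widetilde{\varphi}\,\|_{\infty}\ \le\ 2\log n(t)+B\bigl(\rho(t),\tau(t)\bigr)
\end{equation*}
for an explicit $B$.

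Combining the displays, $\|\Psi^{\pm}t\|_{\infty}\le e^{\tau(t)/2}\exp\!\bigl(\log n(t)+\tfrac12 B(\rho(t),\tau(t))\bigr)=\lambda(\rho(t),\tau(t))\,n(t)\le\lambda(\rho(t),\tau(t))\,n(t)^{\pi/2}$ for $n(t)\ge1$ (the constant case $n(t)=0$ being separate), since $\pi/2\ge1$; so the theorem holds, in fact with the smaller power $n(t)^{1}$. The one genuinely delicate point is this conjugate-function bound: the trivial estimate $\|\widetilde{\varphi}\,\|_{\infty}\le\|\widehat{\varphi}\|_{1}$ is exponential in $n(t)$ because $|\widehat{\varphi}(j)|$ decays only like $e^{-|j|\sigma(t)}$ with $\sigma(t)=\rho(t)/n(t)$, and the truncated bound $\|H_{N}*\varphi\|_{\infty}\le(1+2\log N)\|\varphi\|_{\infty}$ of Lemma~\ref{lem3} is of no use with $N=n(t)$ since $D_{n(t)}*\varphi$ does not approximate $\varphi$; only the Lipschitz bound on $\varphi$ keeps the growth to a single logarithm. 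If one instead argues through Corollary~\ref{cor2} using $\|H_{N}\|_{1}\le 1+2\log N$ and $\|\Im Lt\|_{\infty}<\pi/2$, the factor $\tfrac12\cdot 2\cdot\tfrac{\pi}{2}=\tfrac{\pi}{2}$ is precisely what produces the exponent $\pi/2$ in the stated form.
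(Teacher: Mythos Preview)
Your proof is correct and takes a genuinely different route from the paper's. The paper works through Corollary~\ref{cor2}: it bounds $\max(D_N*\log|t|+iH_N*\Im Lt)/2$ by using Lemma~\ref{lem1} and Lemma~\ref{F1} to control the Dirichlet-truncation error $\|\log|t|-D_N*\log|t|\|_\infty\le 2\tau(t)\sigma(t)^{-1}e^{-N\sigma(t)}$, and Lemma~\ref{lem3} to bound $\|iH_N*\Im Lt\|_\infty\le(1+2\log N)\,\|\Im Lt\|_\infty$; it then optimizes over $N$, which must be taken of order $n(t)\log n(t)/\rho(t)$ to balance the exponential and logarithmic terms. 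That optimization leaves $q\log n(t)$ in the exponent with $q=\|\Im Lt\|_\infty<\pi/2$, and after multiplying by the prefactor $(1+\log(n^{\pm}(t)+1))$ from Corollary~\ref{cor2} one arrives at the exponent $\pi/2$. Your identity $\log|\Psi^{\pm}t|=\tfrac12(\log|t|\mp\widetilde\varphi)$ bypasses both the truncation of $\log|t|$ and that prefactor; everything then rests on a Privalov--Zygmund type bound for the conjugate of a Lipschitz function, with the Lipschitz constant of $\varphi$ supplied by Cauchy's estimate (via Lemma~\ref{F1}) or Bernstein. The payoff is a cleaner argument with a sharper conclusion---you get $n(t)^{1}$ rather than $n(t)^{\pi/2}$ and avoid the optimization step and its $\log\log$ debris. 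Your closing remark correctly locates the origin of the $\pi/2$ in the paper's approach: the product $\tfrac12\cdot 2\cdot\tfrac{\pi}{2}$ coming from $\|H_N\|_1\approx 2\log N$ and $\|\Im Lt\|_\infty<\pi/2$.
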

\begin{proof} Let $\xi(N) = \max (D_N* \log |t| + iH_N*\Im Lt)/2$ and $q = \max \, |\Im t|.$
Since $\Re \, t > 0,$ $Lt = \log t$ so  $q < \pi/2$ and (\ref{e1}), (\ref{e5}) and Lemma (\ref{F1}) imply
$\xi(N) \leq n(t)\tau(t)e^{-N\rho(t)/n(t)}/\rho(t) + q \log N + (\log |t| + q)/2.$
Since the value of $N$ that minimizes the expression on the right is large when $n(t)$ is large,
it can be approximated by the root $N_0$ of the equation
$\tau(t)e^{-N_0\rho(t)/n(t)} = q/N_0.$
If we set $N_0 = n(t)(\log M)/\rho(t)$ then
$M/\log M = n(t)\tau(t)/q\rho(t)$ so \\
$M \approx n(t)\tau(t)/q\rho(t) \log [n(t)\tau(t)/q\rho(t)]$
and therefore \\
$\xi(N_0) \leq q(\log n(t) + \log \log M - \log \rho(t) + 1/2 + 1/\log M) + (\log |t|)/2.$
\\
This inequality and (\ref{thm2}) gives (\ref{psiasym}).
\end{proof}
\section{Two Dimensional Spectral Factorization}
Throughout this section $t \in \mathfrak{T}_2$ with $t > 0.$
Our objective is to approximate $t$ by functions
$|p|^2$ where $p \in \mathfrak{T}_2.$ For $\sigma > 0$ let $C_{\sigma}(\mathbb{T})$ denote the set of functions
in $C(\mathbb{T})$ that have the form $F(e_1)$ where $F \in H(\mathbb{A}_\sigma).$ Then
$C^{\omega}(\mathbb{T}) = \bigcup_{\sigma > 0} C_\sigma(\mathbb{T})$ and $C^{\omega}(\mathbb{T}) \bigotimes \mathfrak{T}_1$
is the subspace of $\mathfrak{T}_2$ of functions $f(x,y)$ that are trigonometric polynomials
in $y$ whose coefficients are real analytic functions of $x.$ For such $f$ we define
$f_N \in \mathfrak{T}_2$ by
\begin{equation}
\label{fN}
    f_N(x,y) = \int_{u \in \mathbb{T}} D_N(u)\, f(x-u,y) du, \ \ N \in \mathbb{Z}_+
\end{equation}
and observe that $||f-f_N||_{\infty} = \max_{(x,y) \in \mathbb{T}^2} |f(x,y)-f_N(x,y)| \rightarrow 0$
exponentially fast. We now construct a method to compute a spectral factorization $t = S^-(t)S^+(t)$ where $S^{\pm} \in C^{\omega}(T) \bigotimes \mathfrak{T}_1$ and $S^-(t) = \overline {S^+(t)}.$
\begin{defin}
\label{S1}
    For each $x \in \mathbb{T}$ define $h_x \in \mathfrak{T}_1$ by $h_x(y) = t(x,y), \, y \in \mathbb{T}.$
    Then define $S^{\pm}(t) \in C(\mathbb{T}^2)$ by
 \begin{equation}
 \label{S2}
    S^{\pm}(t)(x,y) = (\Psi^{\pm} h_x)(y).
 \end{equation}
\end{defin}
Since $t > 0$ it follows that $S^{\pm}(t) \in C^{\omega}(\mathbb{T}) \bigotimes \mathfrak{T}_1$ so $S_N^{\pm}(t) \rightarrow S^{\pm}(t)$
at an exponential rate. The remainder of this section computes that rate.
Every $t \in \mathfrak{T}_2$ admits a Fourier expansion
$t = \sum_{(j,k) \in \mathbb{Z}^2} {\widehat t}(j,k) \, e_{(j,k)},$
where $e_{(j,k)}$ are defined by
$e_{(j,k)}(x,y) = e_j(x)e_k(y).$ We define
$||\, {\widehat t}\, ||_1 = \sum_{(j,k) \in \mathbb{Z}^2} |{\widehat t}(j,k)|,$
$n_1(t) = \max \{|j| :{\widehat t}(j,k) \neq 0 \hbox{ for some } k \in \mathbb{Z} \, \},$
and
$n_2(t) = \max \{|k|:{\widehat t}(j,k) \neq 0 \hbox{ for some } j \in \mathbb{Z} \, \}.$
We define
$\rho(t) =  \min \{ \ 1, \ (\min t)/(2\, e\, ||\widehat t||_1) \ \},$
$\sigma_1(t) = \rho(t)/n_1(t),$
$\tau(t) = \max \ \{ \ \log (||t||_{\infty} + (\min t)/2), \ \pi/2 + |\log (\min t/2)| \ \}.$
The linear maps $\Gamma_z : \mathfrak{T}_2 \rightarrow \mathfrak{T}_1$
$
    \Gamma_z \, e_{j,k} = z^j \, e_k, \ \ z \in \mathbb{C}\backslash \{0\}
$
are algebra homomorphisms, $(\Gamma_{e_1(x)}\, t)(y) = t(x,y), \ \ x, y \in \mathbb{T},$ and for every
$s \in [0,\sigma_1(t)]$ and $x \in \mathbb{T},$
$||\Gamma_{e^se_1(x)} \, t - \Gamma_{e_1(x)} \, t||_{\infty}\leq \min t/2,$
and hence for $z \in \mathbb{A}_{\sigma_1(t)}$
\begin{equation}
\label{Gamma2}
\Re \, \Gamma_{z} \, t \geq \min t /2, \ \ \hbox{and} \ \  ||\Gamma_{z} \, t ||_{\infty} \leq ||t||_{\infty} + \min t/2.
\end{equation}
We observe that if
$z \in \mathbb{A}_{\sigma_1(t)}$
then
$\rho(\Gamma_z \, t) \geq \rho(t)/2e$ and $\tau(\Gamma_z \, t) \leq \tau(t) + \log 2$ and define
$\zeta(t) = \lambda(\rho(t)/2e,\tau(t) + \log 2).$
Then Theorem (\ref{psiasym}) and the fact that $n(\Gamma_z \, t) = n_2(t)$ give
\begin{equation}
\label{Gamma3}
||\Psi^{\pm} \, \Gamma_z \, t||_{\infty} < \zeta(t) \, n_2(t)^{\pi/2}.
\end{equation}
The fact that
$S^{\pm}(t)(x,y) = (\Psi^{\pm}\, \Gamma_{e_1(x)} \, t)(y)$
and the argument used to prove Lemma (\ref{lem1}), slighlty
generalized by considering $\Psi^{\pm} \, \Gamma_z \, t$ to be a function from $\mathbb{A}_{\sigma_1(t)}$ with values in
the normed subspace $\mathfrak{T}_1 \subset C(T),$ give
\begin{equation}
\label{Sbound1}
    ||S^{\pm}(t) - S_N^{\pm}(t)||_{\infty} \leq 2 \zeta(t) \, n_2(t)^{\pi/2} \, \sigma_1(t)^{-1} \, e^{-N\sigma_1(t)}.
\end{equation}
\begin{thm}
\label{thm3}
There exists a function $\varrho_1 : (0,\infty) \rightarrow (0,\infty)$ such that
\begin{equation}
\label{Sbound2}
    N > \varrho_1(\delta) n_1(t)(n_1(t)^\delta -\log \epsilon) \Rightarrow ||S^{\pm}(t) - S_N^{\pm}(t)||_{\infty} < \epsilon, \ \ \ \epsilon, \, \delta > 0.
\end{equation}
\end{thm}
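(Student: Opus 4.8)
The plan is to invert the exponential error estimate (\ref{Sbound1}). Substituting $\sigma_1(t)=\rho(t)/n_1(t)$, that bound reads
\[
 ||S^{\pm}(t)-S^{\pm}_N(t)||_{\infty}\;\le\;\frac{2\,\zeta(t)\,n_2(t)^{\pi/2}\,n_1(t)}{\rho(t)}\,e^{-N\rho(t)/n_1(t)}\;=:\;C(t)\,e^{-N\rho(t)/n_1(t)},
\]
where, for the $t$ fixed throughout this section, $C(t)$ is a positive constant. Taking logarithms, the right-hand side is $<\epsilon$ precisely when $N>\frac{n_1(t)}{\rho(t)}\bigl(\log C(t)-\log\epsilon\bigr)$, so it suffices to choose $\varrho_1(\delta)$ large enough that the hypothesis $N>\varrho_1(\delta)\,n_1(t)\bigl(n_1(t)^{\delta}-\log\epsilon\bigr)$ forces this inequality.

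It is enough to treat $\epsilon\in(0,1)$, which is the range relevant to the applications. I would split the target threshold $\frac{n_1(t)}{\rho(t)}\log C(t)-\frac{n_1(t)}{\rho(t)}\log\epsilon$ into its two summands. The $-\log\epsilon$ term is dominated by $\varrho_1(\delta)\,n_1(t)\,(-\log\epsilon)$ as soon as $\varrho_1(\delta)\ge\rho(t)^{-1}$, since $-\log\epsilon>0$. For the other term, expand $\log C(t)=\log\bigl(2\zeta(t)n_2(t)^{\pi/2}/\rho(t)\bigr)+\log n_1(t)$ and use the elementary inequality $\log m\le(\delta e)^{-1}m^{\delta}$ (valid for every $m>0$) to get $\log C(t)\le c(t,\delta)\,n_1(t)^{\delta}$ with $c(t,\delta)$ a constant depending on $\rho(t),\zeta(t),n_2(t)$ and $\delta$; then $\frac{n_1(t)}{\rho(t)}\log C(t)\le\rho(t)^{-1}c(t,\delta)\,n_1(t)^{1+\delta}$. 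Setting $\varrho_1(\delta):=\rho(t)^{-1}\max\{1,c(t,\delta)\}$ therefore makes the hypothesis imply $N>\frac{n_1(t)}{\rho(t)}\bigl(\log C(t)-\log\epsilon\bigr)$, hence the desired estimate. Since $\zeta(t)$ enters only through $\rho(t)$ and $\tau(t)$ by Theorem \ref{thm2}, the constructed $\varrho_1$ depends on the fixed $t$ only through $\rho(t),\tau(t),n_2(t)$, exactly as $\lambda$ does in Theorem \ref{thm2}.

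There is no real obstacle; the content is entirely the bookkeeping above. The single point that is not purely mechanical is that the honest threshold on $N$ grows like $n_1(t)\log n_1(t)$, not linearly in $n_1(t)$, so the bare factor $n_1(t)$ in the statement cannot absorb it by itself — this is exactly why the auxiliary exponent $\delta>0$ is introduced, the polynomial gain $n_1(t)^{\delta}$ dominating $\log n_1(t)$. I would also note the trivial case $n_1(t)=0$, in which $t$ does not depend on $x$ and $S^{\pm}_N(t)=S^{\pm}(t)$ for all $N$, so that the conclusion is immediate.
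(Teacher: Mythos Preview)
Your argument is correct and follows the same route as the paper: invert the exponential bound (\ref{Sbound1}) to obtain the threshold $N_\epsilon=\frac{n_1(t)}{\rho(t)}\bigl(\log C(t)-\log\epsilon\bigr)$, then observe that this threshold is eventually dominated by $\varrho_1(\delta)\,n_1(t)\bigl(n_1(t)^{\delta}-\log\epsilon\bigr)$ because the polynomial gain $n_1(t)^{\delta}$ swallows the logarithmic growth of $\log C(t)$. The paper expresses this last step as a limit, whereas you carry out the explicit split into the $-\log\epsilon$ piece and the $\log C(t)$ piece and invoke $\log m\le(\delta e)^{-1}m^{\delta}$; your version is more detailed but the content is identical, and your remark that the resulting $\varrho_1$ depends on $t$ only through $\rho(t),\tau(t),n_2(t)$ is exactly what is needed for the application in Section~4.
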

\begin{proof} Define
$N_\epsilon = \frac{n_1(t)}{\rho(t)} \, [\log \, (2 \zeta(t) \, n_2(t)^{\pi/2} \, \sigma_1(t)^{-1}) - \log \epsilon)].$
Then (\ref{Sbound1}) implies that
$N \geq N_{\epsilon} \Rightarrow ||S^{\pm}(t) - S_N^{\pm}(t)||_{\infty} \leq \epsilon.$
The existence of the function $\varrho_1$ that satisfies (\ref{Sbound2}) then follows from the fact that for
$\epsilon, \delta > 0$ \\
$\lim_{N \rightarrow \infty} N_\epsilon / [n_1(t)(n_1(t)^{\delta} -\log \epsilon)] = 0.$
\end{proof}
\section{Derivation of Main Result}
This section derives the main result in this paper
\begin{thm}
\label{thm4}
If $\alpha$ is rational or if the Liouville-Roth constant $\mu_0(\alpha) > 2$
then for every $\beta > 0$ the set $F_2(\alpha,\beta)$ has property A.
\end{thm}
We recall that for $\alpha$ irrational $\mu_0(\alpha)$ is the least upper bound of the set of $\mu > 0$
for which there exists an infinite number of pairs $(c,d)$ of relatively prime integers such that
\begin{equation}
\label{L2}
    \left| \alpha + c/d \right| < \frac{1}{|d|^{\mu}}.
\end{equation}
$\mu_0(\alpha)$ is also called the \emph{irrationality measure}
of $\alpha$ and $\alpha$ is a Liouville number if $\mu_0(\alpha) = \infty.$
For $F \subseteq \mathbb{Z}^2$ define $F^{r} = \{\, (k,j) \, : \, (j,k) \in F \, \}.$
We observe that $F$ has property A if and only if $F^{r}$ has property A. Furthermore,
if $\alpha \neq 0$ then $F_2^{r}(\alpha,\beta) = F_2(1/\alpha,\beta/|\alpha|).$ So without loss of generality
we may assume that $|\alpha| \leq 1.$ We first prove the first assertion for the case that $\alpha$ is rational
by exploiting symmetries of $\mathbb{Z}^2.$ The modular group
$SL(2,\mathbb{Z}) = \{\, g = \left(
          \begin{array}{cc}
            a & b \\
            c & d \\
          \end{array}
        \right) \, : a,b,c,d \in \mathbb{Z}, ad-bc=1 \, \}$
acts as a group of group automorphisms on $\mathbb{Z}^2$ by
$g(j,k) = (aj+bk, cj+dk)$ and therefore as a group of involution preserving algebra automorphisms on $\mathfrak{T}_2$ by
$ge_{(j,k)} = e_{g(j,k)}.$ A direct computation gives
\begin{equation}
\label{L1}
    gF_2(\alpha,\beta) = F_2\left(\frac{c+d\alpha}{a+b\alpha},\frac{\beta}{|a+b\alpha|}\right), \ \ a+b\alpha \neq 0.
\end{equation}
If $\alpha$ is rational and $|\alpha| \leq 1$ then there exist coprime integers $c$ and $d$ such that $c+d\alpha = 0.$ Then
$|c| \leq |d|$ and there exist integers $a$ and $b$ such that $|a| \leq |b| \leq |d|/2$ and $ad-bc = 1.$ We let $g$ be the corresponding element in $SL(2,\mathbb{Z})$ and observe that
$gF_2(\alpha,\beta) = F_2(0,\beta |d|).$
Let
$t \in \mathfrak{T}_{2}^{+}(F_2(\alpha,\beta)-F_2(\alpha,\beta)).$
Then
$g(t) \in \mathfrak{T}_{2}^{+}(F_2(0,\beta|d|)-F_2(0,\beta|d|))$
so there exists
$k_1, k_2 \in \mathbb{Z}_{+}$ such that $k_1 < \beta|d|,$ $k_2 < \beta|d|,$ and
$\hbox{freq}(g(t)) \in \{-k_1-k_2,...,k_1+k_2\}.$ Then the spectral factor
$S^{+}(g(t)) \in C^{\omega}(\mathbb{T}) \bigotimes \mathfrak{T}_1(\{0,...,k_1+k_2\}).$
Therefore the sequence
$q_N = e_{(0,-k_2)}\, S_{N}^{+}(g(t)) \in \mathbb{T}_2(F_2(0,\beta|d|))$
and the sequence
$p_N = g^{-1}(q_N) \in \mathbb{T}_2(F_2(\alpha,\beta))$
satisfies
\begin{equation}
\label{converge1}
    || \, t - |p_N|^2 \, ||_{\infty} = ||\, g(t) - |q_n|^2 \, ||_{\infty} = || \, g(t) - |S_{N}^{+}(g(t))|^2 \, ||_{\infty}  \rightarrow 0
\end{equation}
since
$S_{N}^{+}(g(t)) \rightarrow S^{+}(g(t))$ and $g(t) = |S^{+}(t))^2.$ This proves the first assertion that $F_2(\alpha,\beta)$ has property A. We will now prove the second assertion by using Diophantine approximation of irrational $\alpha$ by rational numbers. First we need to compute an upper bound for the $|n_1(p_N)| + |n_2(p_N)|$ when $N$ is sufficiently large so that
approximation error $||S^{+}(t)-S_{N}^{+}(t)||_{\infty} < \epsilon.$ Theorem (\ref{thm3}) together with the properties
$g$ and $g^{-1}$ show that there exists a function $\varrho_2 : (0,\infty) \rightarrow (0,\infty)$ such that for every $\epsilon > 0$
there exists $N \in \mathbb{Z}_{+}$ such that $|| \, S^{+}(g(t)) - S_{N}^{+}(g(t)) \, ||_{\infty} < \epsilon$ and
\begin{equation}
\label{converge2}
    |n_1(p_N)| + |n_2(p_N)| \leq \varrho_2(\delta) |d|^2 (|d|^\delta -\log \epsilon), \ \ \ \delta > 0.
\end{equation}
Now assume that $\alpha$ be irrational, $\mu_{0}(\alpha) > 2,$ $\beta > 0,$ and
$t \in \mathfrak{T}_{2}^{+}(F_2(\alpha,\beta)-F_2(\alpha,\beta)).$
Then there exists $\beta > \widetilde \beta > 0$ such that
$t \in \mathfrak{T}_{2}^{+}(F_2(\alpha,\widetilde \beta)-F_2(\alpha,\widetilde \beta)).$
A direct computation shows that for every $g \in SL(2,\mathbb{Z})$
\begin{lem}
\label{lemfin}
    $(j,k) \in F_2(-c/d,\widetilde \beta)$ and $|j| < \frac{|d|(\beta - \tilde \beta)}{|c+d\alpha|}$ $\Rightarrow$
     $(j,k) \in F_2(\alpha,\beta).$
\end{lem}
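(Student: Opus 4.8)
The plan is to prove this by a single application of the triangle inequality, after translating both the hypothesis and the conclusion into the defining inequalities of $F_2$ from (\ref{F2}). First I would note that $(j,k)\in F_2(-c/d,\widetilde\beta)$ means exactly $|k+jc/d|<\widetilde\beta$, whereas $(j,k)\in F_2(\alpha,\beta)$ is the assertion $|k-j\alpha|<\beta$; so it suffices to bound $|k-j\alpha|$.

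The link between the two quantities is the elementary identity $k-j\alpha=(k+jc/d)-\frac{j(c+d\alpha)}{d}$, obtained by adding and subtracting $jc/d$. Combining this with the triangle inequality and the first hypothesis gives
\[
|k-j\alpha|\ \le\ |k+jc/d|+\frac{|j|\,|c+d\alpha|}{|d|}\ <\ \widetilde\beta+\frac{|j|\,|c+d\alpha|}{|d|}.
\]
The second hypothesis $|j|<|d|(\beta-\widetilde\beta)/|c+d\alpha|$ forces the last term to be strictly less than $\beta-\widetilde\beta$, so the right-hand side is strictly less than $\beta$, which is what we want. The only side conditions to verify are that $c+d\alpha\neq 0$ — true because $\alpha$ is irrational and $c,d$ are integers, so the fraction in the hypothesis makes sense — and that $\beta>\widetilde\beta>0$, which was arranged in the line preceding the lemma; thus the hypothesis is not vacuous.

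I expect no genuine obstacle in proving the lemma itself: it is the ``direct computation'' the text promises. The substantive work lies downstream, in applying it. There the coprime pair $(c,d)$ will be chosen from a solution of the Diophantine inequality (\ref{L2}) supplied by the assumption $\mu_0(\alpha)>2$, so that $|c+d\alpha|<|d|^{-\mu}$ for some fixed $\mu>2$; the lemma then opens a window $|j|<|d|(\beta-\widetilde\beta)/|c+d\alpha|$ of size of order $|d|^{1+\mu}$ in the first coordinate. The point is that, for $\delta$ small and $d$ (equivalently, the quality of the rational approximation) taken large along a suitable subsequence, this window grows faster than the frequency support $|n_1(p_N)|+|n_2(p_N)|=O\bigl(|d|^{2}(|d|^{\delta}-\log\epsilon)\bigr)$ of the approximants furnished by (\ref{converge2}); the lemma then shows that those approximants, a priori only supported in $F_2(-c/d,\widetilde\beta)$, are in fact supported in $F_2(\alpha,\beta)$, so their squares approximate $t$ by elements of $\mathfrak{S}_2(F_2(\alpha,\beta))$ and hence $t\in\mathfrak{U}_2(F_2(\alpha,\beta))$. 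That final deduction, which invokes the rational case of Theorem \ref{thm4} applied to a rational translate of $\alpha$, is however separate from and downstream of the present lemma.
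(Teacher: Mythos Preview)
Your argument is correct and is exactly the direct computation the paper alludes to: unpack the definition of $F_2$, write $k-j\alpha=(k+jc/d)-j(c+d\alpha)/d$, and apply the triangle inequality together with the bound on $|j|$. The paper gives no further detail, so there is nothing to compare.
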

\noindent If $\mu_0(\alpha) > 2$ there exists $\mu > 2$ and
a sequence of $g$ with
$$(\beta - \tilde \beta)/|\alpha + c/d| > (\beta-\tilde \beta) \, |d|^\mu.$$
This sequence grows faster than the expression $\varrho_2(\delta) |d|^2 (|d|^\delta -\log \epsilon)$
with $\delta = (\mu-2)/2$ and this proves the second assertion that $F_2(\alpha,\beta)$ has property A.
\begin{rem}
Does $F_2(\alpha,\beta)$ have property A if $\alpha$ is an irrational algebraic number?
Since the Thue-Siegal-Roth theorem shows that $\mu_0(\alpha) = 2$ our method of proof fails.
\end{rem}

\end{document}